\documentclass[11pt,final]{article}
\usepackage{amsthm,amsfonts,amssymb, graphicx,hyperref,tikz,enumerate,psfrag,multirow,color,colortbl}
\usepackage{fullpage}

\usepackage[all]{xy}
\usepackage[utf8]{inputenc} 
\RequirePackage{amsthm,amsmath,amsfonts,tikz}  
\usetikzlibrary{arrows,automata}

\newtheorem{lemma}{Lemma}

\newtheorem{question}{Question}

\newtheorem{theorem}[lemma]{Theorem}
\newtheorem{corollary}[lemma]{Corollary}
\newcommand{\A}{\mathbb {A}}
\newcommand{\EE}{\mathbb {E}}
\newcommand{\PP}{\mathbb{P}}

\newcommand{\R}{\mathbb {R}}
\newcommand{\C}{\mathbb {C}}
\newcommand{\HH}{\mathbb{H}}
\newcommand{\cc}{\mathcal {C}}
\newcommand{\cH}{\mathcal {H}}
\newcommand{\cO}{\mathcal {O}}
\newcommand{\cS}{\mathcal {S}}
\newcommand{\cB}{\mathcal {B}}
\newcommand{\cI}{\mathfrak {S}}

\newcommand{\cL}{\mathcal {L}}
\newcommand{\cP}{\mathcal {P}}
\newcommand{\s}{\mathfrak{s}}
\newcommand{\e}{\mathfrak{e}}
\def\cGs{{\mathcal G}_{\star}}
\def\cGss{{\mathcal G}_{\star\star}}
\def\ER{Erd\H{o}s-R\'enyi }
\newcommand{\ii}{{\mathfrak i}}

\title{Spectral atoms of unimodular random trees}
\author{Justin Salez}
\begin{document}
\maketitle
\begin{abstract}
We use the Mass Transport Principle to analyze the local recursion governing the resolvent $(A-z)^{-1}$ of the adjacency operator of unimodular random trees. In the limit where the complex parameter $z$ approaches a given location $\lambda$ on the real axis, we show that this recursion induces a  decomposition of the tree into finite blocks whose geometry directly determines the spectral mass at $\lambda$. We then exploit this correspondence to obtain precise information on the pure-point support of the spectrum, in terms of expansion properties of the tree. In particular, we deduce that the pure-point support of the spectrum of any unimodular random tree with minimum degree $\delta\ge 3$ and maximum degree $\Delta$ is restricted to finitely many points, namely the eigenvalues of trees of size less than $\frac{\Delta-2}{\delta-2}$. More generally, we show that the restriction $\delta\ge 3$ can be weakened to $\delta\ge 2$, as long as the anchored isoperimetric constant of the tree remains bounded away from $0$. This applies in particular to any unimodular Galton-Watson tree without leaves, allowing us to settle a conjecture of Bordenave, Sen and Vir\'ag (2013).  
\end{abstract}

\section{Introduction}
\subsection{Motivations}
Unimodular networks are probability measures on countable rooted graphs satisfying a certain spatial invariance (see below). They were introduced in \cite{rec,obj,uni} to describe the geometry of sparse graphs when seen from a uniformly chosen vertex. These \emph{local weak limits} are often more convenient to work with than the finite-graph sequences that they approximate, and they have been shown to capture the asymptotic behavior of a number of important graph parameters. One emblematic example is the empirical distribution of the eigenvalues $\lambda_1\geq \ldots\geq \lambda_{|V|}$ of the  adjacency matrix:
\begin{eqnarray}
\label{df:esd}
\mu_G & := & \frac{1}{|V|}\sum_{k=1}^{|V|}\delta_{\lambda_k}.
\end{eqnarray}
This fundamental invariant encodes a considerable amount of information about the underlying graph $G=(V,E)$ (see, e.g., the monograph \cite{cds}), and a vast body of works has been devoted to the understanding of its typical behavior as $|V|\to\infty$, under various models. In the sparse regime $|E|=\cO(|V|)$, an elegant, unified answer can be given using the framework of local weak limits: 
\begin{theorem}[\cite{resolvent,pointwise,BorSpectrum}]
\label{th:esd}
If a sequence of finite graphs $(G_n)_{n\ge 1}$ admits a local weak limit $\cL$, then the associated sequence of empirical spectral distributions $(\mu_{G_n})_{n\ge 1}$ admits a weak limit $\mu_{\cL}\in\cP(\R)$. Moreover, the convergence holds in the Kolmogorov-Smirnov metric:
\begin{eqnarray*}
\sup_{\lambda\in\R}\left|\mu_{G_n}\left((-\infty,\lambda]\right)-\mu_{\cL}\left((-\infty,\lambda]\right)\right| & \xrightarrow[n\to\infty]{} & 0.
\end{eqnarray*}
\end{theorem}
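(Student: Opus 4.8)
The plan is to isolate a soft ``core'' from a single deep input. The soft part reduces the statement to bounded degree and then gets weak convergence by the moment method; the deep part, which is exactly what makes the convergence hold in the \emph{Kolmogorov--Smirnov} (KS) sense and not merely weakly, is the convergence of the atomic masses.

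\textbf{Reduction to bounded degree.} In the sparse regime $\limsup_n |E_n|/|V_n|<\infty$. Fix a truncation level $D$, put $S^D_n:=\{v:\deg_{G_n}(v)>D\}$ (of size $\le 2|E_n|/D$), and let $G^D_n$ be $G_n$ with every edge incident to $S^D_n$ deleted, so that $A_{G^D_n}=P_n A_{G_n} P_n$ with $P_n$ the coordinate projection onto $V_n\setminus S^D_n$ --- a Hermitian perturbation of $A_{G_n}$ of rank at most $2|S^D_n|$. The rank inequality for eigenvalue counting functions then yields
\begin{equation*}
\sup_{\lambda\in\R}\bigl|\mu_{G_n}((-\infty,\lambda]) - \mu_{G^D_n}((-\infty,\lambda])\bigr| \;\le\; \frac{2|S^D_n|}{|V_n|} \;\le\; \frac{4|E_n|}{D\,|V_n|} \;=\; O(1/D),
\end{equation*}
uniformly in $n$. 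Since $(G^D_n)_n$ is bounded-degree and still admits a local weak limit (the image of $\cL$ under the measurable deletion map), granting the theorem in the bounded-degree case --- with KS-limit $\mu^D$ --- the triangle inequality makes $(\mu_{G_n})_n$ Cauchy for $\|\cdot\|_{KS}$, hence convergent to some $\mu_{\cL}$, which is a probability measure because $\{\mu^D\}_D$ is tight ($\int x^2\,d\mu^D=\lim_n 2|E(G^D_n)|/|V_n|\le\limsup_n 2|E_n|/|V_n|<\infty$, uniformly in $D$).

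\textbf{Weak convergence in bounded degree, and the atomic masses.} Assume $\Delta(G_n)\le D$ and let $U_n$ be $G_n$ rooted at a uniform vertex, so $U_n\to\cL$ in distribution in the local topology. The $k$-th moment of $\mu_{G_n}$ is $|V_n|^{-1}\operatorname{tr}(A_{G_n}^k)=\EE[N_k(U_n)]$, with $N_k(G,o)$ the number of closed walks of length $k$ based at $o$; such walks stay in the ball of radius $k$ and branch at most $D$-fold, so $N_k$ is a bounded, local, continuous functional, all moments converge, and --- the measures being supported in $[-D,D]$ --- $\mu_{G_n}\Rightarrow\mu_{\cL}$ weakly (the resolvent method gives the same: $(G,o)\mapsto\langle\delta_o,(A_G-z)^{-1}\delta_o\rangle$ is bounded by $1/\operatorname{Im}z$ and continuous, so the Stieltjes transforms converge). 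To pass from weak to KS convergence I would use the soft lemma: if $\nu_n\Rightarrow\nu$ on $\R$ and $\nu_n(\{\lambda\})\to\nu(\{\lambda\})$ for every $\lambda$, then $\sup_\lambda|F_{\nu_n}(\lambda)-F_\nu(\lambda)|\to 0$ --- automatic at continuity points of $F_\nu$, while at its countably many atoms one uses the two hypotheses and monotonicity to control both one-sided limits, and a tightness argument at the locations of near-maximal discrepancies rules out ``sliding'' atoms. Hence everything reduces to
\begin{equation*}
\mu_{G_n}(\{\lambda\}) \;=\; \frac{1}{|V_n|}\dim\ker(A_{G_n}-\lambda I) \;\xrightarrow[n\to\infty]{}\; \mu_{\cL}(\{\lambda\}), \qquad \lambda\in\R .
\end{equation*}

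\textbf{The main obstacle.} This last convergence is the crux and the hard point. Unlike the walk counts $N_k$, the spectral projector $\mathbf 1_{\{\lambda\}}(A)$ is only semicontinuous in the local topology --- eigenspaces can be created in the limit --- so the identity above is genuinely \emph{not} implied by local weak convergence, and it fails for general (non-unimodular) bounded-degree sequences. It is the ``pointwise convergence of spectral measures'' phenomenon: writing $\mu_{G_n}(\{\lambda\})=\lim_{\eta\downarrow 0}\eta\operatorname{Im}g_{G_n}(\lambda+i\eta)$ for the Stieltjes transform $g_{G_n}$, the task is to interchange $n\to\infty$ with $\eta\downarrow 0$, i.e.\ to bound, uniformly in $n$, the $\mu_{G_n}$-mass that shrinking windows around $\lambda$ carry in excess of the atom at $\lambda$. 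This uniform near-axis control of the resolvent --- where unimodularity enters, via the Mass Transport Principle, forcing a rigidity of the pure-point spectrum (atoms of $\mu_{\cL}$ among the totally real algebraic integers, with controlled denominators, so the atomic masses cannot but converge) --- is the substance of the cited works, and it is precisely the mechanism that the present paper re-examines and sharpens for trees.
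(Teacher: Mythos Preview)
The paper does not prove this statement at all: Theorem~\ref{th:esd} is quoted from the literature (the references \cite{resolvent,pointwise,BorSpectrum}) as background for the spectral framework, so there is no proof in the paper to compare your proposal against.

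That said, your outline --- rank-inequality truncation to bounded degree, weak convergence by moments (or Stieltjes transforms), then upgrade to KS via pointwise convergence of atom masses --- is indeed the standard route in those references. Two comments on where your sketch drifts.

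First, a minor point: your ``soft lemma'' (weak convergence plus convergence of every atom mass implies uniform convergence of the CDFs) is correct, but the one-line justification you give is too loose; a clean proof goes by showing pointwise convergence of $F_{\nu_n}$ \emph{everywhere} (Portmanteau gives $\limsup F_{\nu_n}(\lambda)\le F_\nu(\lambda)$, and the atom hypothesis upgrades the lower bound at discontinuities), and then a compactness argument on a finite $\varepsilon$-grid containing the large atoms of $\nu$.

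Second, and more substantively, your final paragraph misidentifies the mechanism behind the convergence of atom masses. It is \emph{not} the Mass Transport Principle on the limit, nor the algebraicity of the atoms of $\mu_\cL$; those are results about the limiting object, and the present paper's contribution (for trees) is of that latter kind, not a re-examination of the proof of Theorem~\ref{th:esd}. The actual argument in \cite{pointwise} is a L\"uck-approximation estimate carried out on the \emph{finite} graphs $G_n$: since $A_{G_n}$ is an integer matrix with spectrum in $[-D,D]$, the product of its nonzero eigenvalues shifted by $\lambda$ (for algebraic $\lambda$) is an algebraic integer of controlled height, which yields a log-H\"older continuity bound on $\mu_{G_n}$ near $\lambda$ uniform in $n$ and lets one interchange $n\to\infty$ with $\eta\downarrow 0$. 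Unimodularity of $\cL$ here is automatic from the $G_n$ being finite, not an ingredient one invokes; your parenthetical about failure for ``non-unimodular bounded-degree sequences'' is therefore beside the point in this setting.
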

The construction of $\mu_{\cL}$ relies on the spectral theorem for self-adjoint operators on Hilbert spaces, and will be recalled later. The essential message is that the asymptotic spectral analysis of large sparse graphs can, in principle, be performed directly at the level of their local weak limits. This program was initiated in \cite{resolvent,rank} and continued in \cite{Bordenave2015,BSV}, see also the related preprints \cite{2015arXiv150507412B,2016arXiv160902209R}. The recent survey \cite{BorSpectrum} contains a thorough exposition of the current state of the art, as well as a list of open problems. Not much is known about $\mu_\cL$, even in the important special case where $\cL$ is a  \emph{unimodular Galton-Watson tree}, i.e., a random rooted tree obtained by a Galton-Watson branching process where the root has a given offspring distribution $\pi=(\pi_k)_{k\geq 0}$ (with finite, non-zero  mean) and all descendants have the size-biased offspring distribution  $\widehat{\pi}=(\widehat{\pi}_k)_{k\geq 0}$ defined by
\begin{eqnarray}
\label{eq:sizebiased}
\widehat\pi_k & = & \frac{(k+1)\pi_{k+1}}{\sum_{i}i\pi_{i}}.
\end{eqnarray}
This particularly simple unimodular network -- henceforth denoted $\textsc{ugwt}(\pi)$ -- plays a distinguished role in the theory, since it is the local weak limit of large random graphs with asymptotic degree distribution $\pi$ \cite{BorOptimization}. This includes the popular \ER model with fixed average degree $c$ ($\pi=\textrm{Poisson}(c)$), or the random $r-$regular graph  ($\pi=\delta_r$).   In the latter case,  $\textsc{ugwt}(\pi)$ is just the infinite $r-$regular tree, whose spectrum is the well-known Kesten-McKay distribution \cite{mckay}:
\begin{eqnarray}
\label{mckay}
\mu_{\textsc{ugwt}(\delta_r)}(d\lambda) & = & \frac{r\sqrt{4(r-1)-\lambda^2}}{2\pi(r^2-\lambda^2)}{\bf 1}_{(-2\sqrt{r-1},2\sqrt{r-1})}(\lambda)\, d\lambda.
\end{eqnarray}
Apart from this degenerate case, an explicit description of $\mu_{\textsc{ugwt}(\pi)}$ seems out of reach at present, and our understanding remains extremely limited. For example, the following question was raised by Bordenave, Sen and Vir\'ag \cite[Question 1.8]{BSV} and reiterated in \cite[Question 4.5]{BorSpectrum}:
\begin{question}
\label{question}If $\pi$ has finite support and $\pi_1=0$, does $\mu_{\textsc{ugwt}(\pi)}$ admit only finitely many atoms ?
\end{question}

The aim of the present work is to provide a general understanding of the atomic mass $\mu_{\cL}(\{\lambda\})$ assigned to an arbitrary point $\lambda\in\R$, for any unimodular network $\cL$ that is concentrated on trees. Our main contribution is a general formula relating $\mu_\cL(\{\lambda\})$ to  the geometry of the connected components of a certain induced subgraph, see Theorem \ref{th:main}. Among other consequences, we answer Question \ref{question} in the affirmative, and prove that the conclusion actually extends to any unimodular random tree whose anchored isoperimetric constant is bounded away from $0$. This includes, in particular, all unimodular random trees with degrees in $\{\delta,\ldots,\Delta\}$, for any fixed $3\le\delta\le\Delta<\infty$. In addition, we provide an explicit list of all possible atoms.

\subsection{Unimodular networks and their spectral measures}
We only recall the necessary definitions, and refer to the excellent survey \cite{BorSpectrum} for details. 
\paragraph{Unimodular networks.} A \textit{rooted graph} $(G,o)$ is a graph $G=(V,E)$ together with a distinguished vertex $o\in V$, called the
\textit{root}. An isomorphism between two rooted graphs is a graph isomorphism which additionally maps the root to the root. We let $\cGs$ denote the set of isomorphism classes of connected, locally finite rooted graphs on a countable number of vertices. We turn $\cGs$ into a Polish space by defining the distance between $(G,o)$ and $(G',o')$ to be $1/(1+R)$ where 
\begin{eqnarray}
R & = & \sup\{r\ge 0\colon \cB_r(G,o)\textrm{ is isomorphic to } \cB_r(G',o')\}.
\end{eqnarray} Here, $\cB_r(G,o)$ is the ball of radius $r$ around $o$ in $G$, viewed as a rooted graph. \emph{Uniform rooting} is a natural procedure for turning a finite graph $G=(V,E)$ into a $\cGs-$valued random variable:   declare a uniformly chosen vertex $o\in V$ as the root, restrict the graph to its connected component, and forget the labels. %Thus, $\cU(G)$ is the empirical distribution of all possible rootings of $G$ (see Figure \ref{fig:ug}). 
If $(G_n)_{n\geq 1}$ is a sequence of finite graphs 
and if the sequence of laws induced by uniform rooting admits a limit $\cL$ in the usual weak sense for Borel probability measures on Polish spaces, we call $\cL$ the \textit{local weak limit} of $(G_n)_{n\geq 1}$.   Uniform rooting confers to $\cL$ a powerful invariance:  let $\cGss$ be the natural analogue of $\cGs$ for doubly-rooted graphs $(G,x,y)$; a $\cGs-$valued random variable $(G,o)$ (rather, its law $\cL$) is called  \emph{unimodular} if for any Borel function $f\colon \cGss\to [0,\infty]$, 
\begin{eqnarray}
\label{eq:mtp}
\EE\left[\sum_{x\in V(G)}f( G,o,x)\right] & = & 
\EE\left[\sum_{x\in V(G)}f(G,x,o)\right].
\end{eqnarray}
One may think of $f(G,x,o)$ as an amount of mass sent from  $x$ to $o$ in $G$: the equality (\ref{eq:mtp}) -- called the \emph{Mass Transport Principle}  -- then expresses the fact that the expected mass received and sent by the root coincide. It is easy to see that the local weak limit of any sequence of finite graphs is unimodular. Whether the converse holds is an open problem with deep implications, see \cite{uni,sofic,unitree}. 

\paragraph{Spectral measures.} Let $G=(V,E)$ be a countable, locally finite graph. Consider the Hilbert space $\cH=\ell^2_\C(V)$ and its canonical orthonormal basis $(\e_o)_{o\in V}$, where
\begin{eqnarray}
\e_o\colon x & \longmapsto & 
\left\{
\begin{array}{ll}
1 & \textrm{if }x=o\\
0 & \textrm{otherwise.}
\end{array}
\right.
\end{eqnarray}
By definition, the adjacency operator $A$ of $G$ is the linear operator on $\cH$ whose domain is the (dense) subspace of finitely-supported functions, and whose action on the above basis is given by
\begin{eqnarray}
\langle  \e_x | A\e_y\rangle & = & 
\left\{
\begin{array}{ll}
1 & \textrm{if }\{x,y\}\in E\\
0 & \textrm{otherwise.}
\end{array}
\right.
\end{eqnarray}
$A$ is symmetric, and this already ensures that $A-z$ is injective for each $z\in\C\setminus\R$. If in addition the range of $A-z$ is dense, then $A$ (or $G$ itself) is said to be (essentially) \emph{self-adjoint}. In that case, the resolvent $(A-z)^{-1}$ extends to a unique bounded linear operator on $\cH$ and for each $o\in V$, the spectral theorem for self-adjoint operators (see, e.g. \cite[Chapter VII]{reedsimon}) yields the representation
\begin{eqnarray}
\label{def:rootedmeasure}
\langle \e_o|(A-z)^{-1}\e_o \rangle & = & \int_\R\frac{1}{\lambda-z}\,\mu_{(G,o)}(d\lambda),\qquad (z\in\C\setminus\R)
\end{eqnarray}
for a unique Borel probability measure $\mu_{(G,o)}$ on $\R$ known as the spectral measure of the pair $(A,\e_o)$. This will be our main object of study. To gain some intuition, consider the case where $G$ is finite: then there is an orthonormal basis of eigenfunctions $\phi_1,\ldots,\phi_{|V|}$ of $A$ with respective eigenvalues $\lambda_1,\ldots,\lambda_{|V|}$, and we easily compute
\begin{eqnarray}
\label{df:finite}
\mu_{(G,o)} & = & \sum_{k=1}^{|V|}|\phi_k(o)|^2\delta_{\lambda_k}.
\end{eqnarray}
In words, $\mu_{(G,o)}$ is a mixture of atoms at the eigenvalues of $A$, their masses being the squared norm of the orthogonal projection of $\e_o$ onto the corresponding eigenspaces. In particular, we see that
\begin{eqnarray}
\label{locglo}
\mu_G & = & \frac{1}{|V|}\sum_{o\in V}\mu_{(G,o)}.
\end{eqnarray}
Since the right-hand side is nothing but an expectation under uniform rooting, it is natural to extend the definition of the spectral distribution (\ref{df:esd}) to any unimodular network $\cL$ by setting
\begin{eqnarray}
\label{df:uni}
\mu_{\cL}(\cdot) & := &  \EE\left[\mu_{({G},o)}(\cdot)\right]\qquad \textrm{ where }\qquad (G,o)\sim\cL. 
\end{eqnarray}
This is the limiting measure appearing in Theorem \ref{th:esd}. We emphasize that there are infinite graphs  whose adjacency operator is not self-adjoint, see, e.g., \cite{muller}. However, such pathological graphs have zero measure under any unimodular law (see \cite[Proposition 2.2]{BorSpectrum}), so that the definition (\ref{df:uni}) makes perfect sense. Regarding measurability issues, let us simply note that the map $(G,o)\mapsto \mu_{(G,o)}$ is continuous when restricted to self-adjoint elements of $\cGs$ (see e.g., \cite{PhD}[Lemma 2.2]). 
\begin{figure}
\begin{center}
\includegraphics[angle =0,width=10cm]{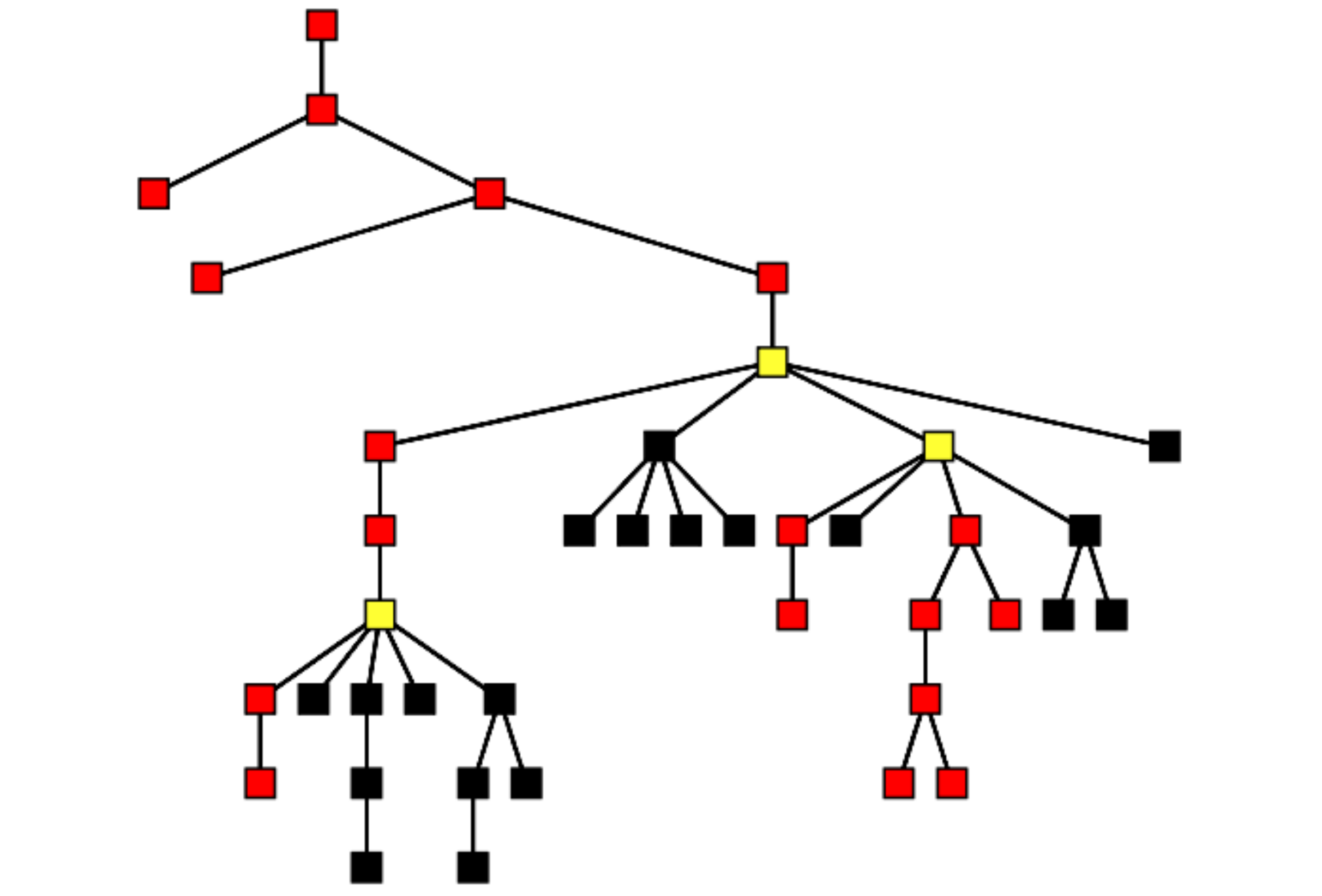}
\caption{Spectral decomposition of a $40-$vertex tree at $\lambda=1$: $\cI_1$ (in red) consists of $5$ connected components, and its boundary $\partial \cI_1$ (in yellow) has size $3$, so the multiplicity of $1$ is $5-3=2$.\label{fig:tree}}
\end{center}
\end{figure}
\subsection{Main results} 
We seek to develop a general understanding of the mass $\mu_{\cL}(\{\lambda\})=  \EE\left[\mu_{({ G},{ o})}(\{\lambda\})\right]$ assigned to an arbitrary $\lambda\in\R$. Let us define the $\lambda-$\emph{support} of a self-adjoint graph $G=(V,E)$ as
\begin{eqnarray}
\label{df:cI}
\cI_\lambda & := & \left\{o\in V\colon \mu_{(G,o)}(\{\lambda\})>0\right\}.
\end{eqnarray}
When $G$ is finite, it follows from (\ref{df:finite}) that $o\in\cI_\lambda$ if and only if there is an eigenfunction for $\lambda$ which does not vanish at $o$. Let us say that a finite graph $G=(V,E)$ is \emph{$\lambda-$prime} if $\lambda$ is an eigenvalue of the adjacency matrix of $G$ but not of $G\setminus \{o\}$, for any $o\in V$. Note that the eigenvalue $\lambda$ is then necessarily simple, with a nowhere vanishing associated eigenfunction. We use the standard notation $\deg_S(o)$ for the number of neighbors of $o\in V$ in the subset $S\subseteq V$, and $\partial S$ for the (external) \emph{boundary} of $S$, consisting of those vertices outside $S$ having at least one neighbor in $S$. We will use the short-hand $\partial o$ instead of $\partial\{o\}$ to denote the set of neighbors of $o$. 
\begin{theorem}
\label{th:main} Let $\cL$ be a unimodular network concentrated on trees, and let $\lambda\in\R$. Then almost-surely under $\cL$, the connected components induced by $\cI_\lambda$ are finite $\lambda-$prime trees, and 
\begin{eqnarray}
\label{eq:main}
\mu_\cL(\{\lambda\}) & = & \PP\left(o\in \cI_\lambda\right)-\frac 12\EE\left[\deg_{\cI_\lambda}(o){\bf 1}_{(o\in {\cI_\lambda})}\right]-\PP\left(o\in \partial {\cI_\lambda}\right).
\end{eqnarray}
\end{theorem}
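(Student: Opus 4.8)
The plan is to read a combinatorial block decomposition off the real-axis limit of the scalar resolvent recursion on trees, and then derive \eqref{eq:main} from the Mass Transport Principle.

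\emph{Step 1: the local recursion as $z\to\lambda$.} On a tree, $R_{(G,o)}(z):=\langle\e_o|(A-z)^{-1}\e_o\rangle$ satisfies $R_{(G,o)}(z)=-\bigl(z+\sum_{v\sim o}R_{(G_v,v)}(z)\bigr)^{-1}$, where $G_v$ is the subtree hanging beyond the edge $\{o,v\}$, and Stieltjes inversion gives $\mu_{(G,o)}(\{\lambda\})=\lim_{\eta\downarrow0}\eta\,\mathrm{Im}\,R_{(G,o)}(\lambda+i\eta)$. Each $R_{(G_v,v)}$ being a Nevanlinna function, the monotone quantities $\mathrm{Im}\,R$, $\eta\,\mathrm{Im}\,R$, $\eta^{-1}\mathrm{Im}\,R$ have (possibly infinite) limits as $\eta\downarrow0$, and I would sort each \emph{directed} edge $(u\to v)$ according to the behaviour of $R_{(G_{u\to v},v)}(\lambda+i\eta)$ into \emph{regular} ($R$ has a finite non-zero limit, written $R_{(u\to v)}(\lambda)$), \emph{null} ($R\to0$), or \emph{singular} ($\eta R$ has a non-zero limit, i.e.\ $v$ carries an atom at $\lambda$ in $G_{u\to v}$). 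Iterating the recursion once shows that the class of $(u\to v)$ is a deterministic function of the multiset of classes of its forward edges $(v\to w)$, $w\sim v$, $w\neq u$: it is null iff some forward edge is singular; singular iff no forward edge is singular and $\lambda+\sum_{(v\to w)\ \mathrm{regular}}R_{(v\to w)}(\lambda)=0$; regular otherwise. The same computation at a genuine vertex gives $o\in\cI_\lambda$ iff no edge out of $o$ is singular and $\lambda+\sum_{(o\to v)\ \mathrm{regular}}R_{(o\to v)}(\lambda)=0$, in which case $\mu_{(G,o)}(\{\lambda\})=\bigl(1+\sum_{v\sim o}\beta_v\bigr)^{-1}$ with $\beta_v:=\lim_{\eta\downarrow0}\eta^{-1}\mathrm{Im}\,R_{(o\to v)}(\lambda+i\eta)$ (a finite sum, precisely because $o\in\cI_\lambda$); it also yields two compatibility facts — adjacent $o,v\in\cI_\lambda$ force $(o\to v)$, $(v\to o)$ both regular with $R_{(o\to v)}(\lambda)R_{(v\to o)}(\lambda)=1$, while $o\in\cI_\lambda$ and $v\sim o$, $v\notin\cI_\lambda$ force $(o\to v)$ null and $(v\to o)$ singular (assuming $(o\to v)$ regular and subtracting the vertex equation at $v$ from the edge equation for $(o\to v)$ gives $\lambda+\sum_{w\sim v}R_{(v\to w)}(\lambda)=0$, i.e.\ $v\in\cI_\lambda$).

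\emph{Step 2: the block structure.} By the reciprocity relation, along the internal edges of a connected component $B$ of $\cI_\lambda$ the numbers $-R_{(o\to v)}(\lambda)$ are the successive ratios of a single nowhere-vanishing function $\psi_B$ on $B$, and the vertex equations together with the nullity of the edges leaving $B$ make $\psi_B$ a $\lambda$-eigenvector of the induced subgraph on $B$; a standard argument on trees upgrades ``$\lambda\in\mathrm{spec}(A_B)$ with a nowhere-vanishing eigenvector'' to ``$B$ is $\lambda$-prime''. Expanding the null recursion also shows that no $v\in\partial\cI_\lambda$ can meet just one block: a null edge $(o\to v)$ forces $v$ to own a singular forward edge, hence an $\cI_\lambda$-neighbour sitting in a block other than that of $o$. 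Finally, finiteness of $B$ is where the Mass Transport Principle becomes indispensable: an infinite $B$ would be an invariantly defined, infinite, locally finite tree all of whose internal edges are regular, and I would exclude it by a transport argument — e.g.\ every vertex of $B$ ships a unit of mass toward a (non-existent) finite resonant core, so that the mass received by the root cannot be finite. I expect this exclusion of infinite resonance chains to be the most delicate step, since it is exactly where the random/unimodular hypothesis is essential.

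\emph{Step 3: blocks determine the mass.} Using the explicit formula for $\mu_{(G,o)}(\{\lambda\})$, a first-order expansion of the recursion along the internal edges of a finite block $B$ telescopes into $\sum_{x\in B}\mu_{(G,x)}(\{\lambda\})=1-c_B$, where $c_B=\sum_{w\in\partial B}c_{B,w}$ with each $c_{B,w}\ge0$ and, crucially, $\sum_{B:\,w\sim B}c_{B,w}=1$ for every $w\in\partial\cI_\lambda$ (each boundary vertex distributes exactly one unit of mass among the blocks it touches). Equivalently, in the eigenspace description $\ker(A-\lambda)=\bigl\{\sum_Bc_B\psi_B:\ \sum_{v\sim w,\,v\in\cI_\lambda}c_{B_v}\psi_{B_v}(v)=0\text{ for all }w\in\partial\cI_\lambda\bigr\}$ — which holds because $\mu_{(G,w)}(\{\lambda\})=0$ forces every $\lambda$-eigenvector to be supported on $\cI_\lambda$ — the quantity $\sum_{x\in B}\|P_\lambda\e_x\|^2$ ($P_\lambda$ the spectral projection at $\lambda$) is this geometric function of the block and its boundary. (In the finite case this already recovers $\mathrm{mult}_G(\lambda)=\#\{\text{components of }\cI_\lambda\}-|\partial\cI_\lambda|$, using Step 2 to see that the $|\partial\cI_\lambda|$ constraints are independent.)

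\emph{Step 4: two mass transports.} By \eqref{df:uni}, $\mu_\cL(\{\lambda\})=\EE[\mu_{(G,o)}(\{\lambda\})]$; applying \eqref{eq:mtp} to $f_1(G,o,x):=|B_o|^{-1}\mu_{(G,x)}(\{\lambda\})\mathbf 1_{\{o,x\in\cI_\lambda,\,B_o=B_x\}}$ (backward sum $\mu_{(G,o)}(\{\lambda\})$, forward sum $|B_o|^{-1}\sum_{x\in B_o}\mu_{(G,x)}(\{\lambda\})\mathbf 1_{\{o\in\cI_\lambda\}}$) and invoking Step 3 gives $\mu_\cL(\{\lambda\})=\EE[|B_o|^{-1}\mathbf 1_{\{o\in\cI_\lambda\}}]-\EE[|B_o|^{-1}c_{B_o}\mathbf 1_{\{o\in\cI_\lambda\}}]$. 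For the first expectation, apply \eqref{eq:mtp} to $f_2(G,o,x):=|B_o|^{-1}\bigl(1-\tfrac12\deg_{B_o}(x)\bigr)\mathbf 1_{\{o,x\in\cI_\lambda,\,B_o=B_x\}}$: since $\sum_{x\in B}\bigl(1-\tfrac12\deg_B(x)\bigr)=|B|-|E(B)|=1$ for the tree $B$, the forward sum at $o$ is $|B_o|^{-1}\mathbf 1_{\{o\in\cI_\lambda\}}$ and the backward sum is $\bigl(1-\tfrac12\deg_{B_o}(o)\bigr)\mathbf 1_{\{o\in\cI_\lambda\}}$, giving $\EE[|B_o|^{-1}\mathbf 1_{\{o\in\cI_\lambda\}}]=\PP(o\in\cI_\lambda)-\tfrac12\EE[\deg_{\cI_\lambda}(o)\mathbf 1_{\{o\in\cI_\lambda\}}]$. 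For the second expectation, let every $o\in B$ send mass $|B|^{-1}c_{B,w}$ to each $w\in\partial B$: the forward sum at $o$ is $|B_o|^{-1}c_{B_o}\mathbf 1_{\{o\in\cI_\lambda\}}$, and the backward sum at a vertex $w$ is $\sum_{B:\,w\sim B}c_{B,w}=\mathbf 1_{\{w\in\partial\cI_\lambda\}}$, so \eqref{eq:mtp} gives $\EE[|B_o|^{-1}c_{B_o}\mathbf 1_{\{o\in\cI_\lambda\}}]=\PP(o\in\partial\cI_\lambda)$. Subtracting yields \eqref{eq:main}. All three transports have uniformly bounded forward sums (using $\sum_{x\in B}|1-\tfrac12\deg_B(x)|\le 2|B|$ for $f_2$), so beyond Step 2 only routine measurability remains to be checked.
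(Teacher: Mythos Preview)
Your overall strategy---analyze the cavity recursion at $\lambda$ to extract a block decomposition, then apply the Mass Transport Principle---matches the paper's, and your Steps~3--4 are correct (with $c_{B,w}=\alpha_{o_w}\beta_{w\to o_w}$, the normalization $\sum_{B\ni w}c_{B,w}=1$ is exactly the paper's identity~\eqref{sumz}). Two points deserve comment.

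\textbf{The finiteness argument.} Your proposed transport in Step~2 (``every vertex ships mass toward a non-existent finite resonant core'') is not a complete argument, and you correctly flag this as the delicate step. The paper's resolution is different and cleaner: writing $\cI_\lambda^\star$ for the union of the \emph{infinite} components of $\cI_\lambda$, the same edge identity you use in Step~3 (namely $\alpha_o\beta_{v\to o}+\alpha_v\beta_{o\to v}=1$ for adjacent $o,v\in\cI_\lambda$, i.e.\ \eqref{sum}) together with the Mass Transport Principle and the vertex identity $\sum_{v\sim o}\alpha_o\beta_{v\to o}=1-\alpha_o$ (i.e.\ \eqref{deg}) gives
\[
\EE\bigl[\deg_{\cI_\lambda^\star}(o)\,\mathbf 1_{(o\in\cI_\lambda^\star)}\bigr]\ \le\ 2\,\EE\bigl[(1-\alpha_o)\,\mathbf 1_{(o\in\cI_\lambda^\star)}\bigr].
\]
If $\PP(o\in\cI_\lambda^\star)>0$, the conditional expected internal degree is therefore strictly less than $2$, contradicting the Aldous--Lyons theorem that a unimodular random forest all of whose components are infinite has expected root degree at least $2$. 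Note that this argument operates purely at the edge level and does not presuppose finiteness of any block---whereas your Steps~3 and~4 do, so for you the logical order genuinely matters.

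\textbf{Order of the derivation.} Relatedly, the paper proves~\eqref{eq:main} \emph{directly} from the edge identities \eqref{sum}, \eqref{sumz}, \eqref{deg} via two applications of the Mass Transport Principle (one over internal edges of $\cI_\lambda$, one over boundary edges), and only afterwards deduces the component form~\eqref{eq:component}. You instead pre-sum over a block in Step~3 and then transport at the block level. Both routes work once finiteness is available; the paper's order has the advantage that~\eqref{eq:main} is established independently of the finiteness claim.

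A minor remark: your trichotomy regular/null/singular in Step~1 omits the absorbing state $\alpha=0,\ \beta=\infty$ (the last row of Table~\ref{table:Gamma}); this does not affect anything on $\cI_\lambda\cup\partial\cI_\lambda$, but your assertion that the class of $(u\to v)$ is determined by the classes of its forward edges is not literally true without it.
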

This result has a simple interpretation in the case where  $\cL$ is the law induced  by uniform rooting of a finite forest: multiplying (\ref{eq:main}) by the  number of vertices, we obtain that the total multiplicity of the eigenvalue $\lambda$ in any finite forest equals the number of connected components of $\cI_\lambda$ minus $|\partial\cI_\lambda|$, as illustrated in Figure \ref{fig:tree}. In fact, the corresponding eigenspace can be completely described: any eigenfunction is zero outside $\cI_\lambda$, so its restriction to any connected component of $\cI_\lambda$ is an eigenfunction of the underlying tree. But the latter is $\lambda-$prime, so the solution is unique thereon, up to proportionality. The corresponding coefficients (one for each connected component) can be freely chosen, subject to the constraint that at each $x\in\partial\cI_\lambda$, the neighboring entries add to zero. 

Our main interest for the above formula lies in its potential to provide explicit information on the pure-point support of the spectral distribution:
\begin{eqnarray}
\label{def:support}
\Sigma_{p.p.}(\cL) & := & \left\{\lambda\in\R\colon\mu_\cL(\{\lambda\})>0\right\}.
\end{eqnarray}
For example, one of the surprising aspects of Theorem \ref{th:main} is that, even on infinite trees, the presence of a spectral atom at a given location $\lambda\in\R$ ultimately stems from the aggregation of \emph{finite} $\lambda-$prime trees. This implies in particular that $\lambda$  must belong to the (countable, dense) ring $\A$ of \emph{totally real algebraic integers}, i.e. roots of real-rooted monic polynomials with integer coefficients. 
\begin{corollary}[Algebraicity]
\label{co:algebraic}
Let $\cL$ be a unimodular network concentrated on trees. Then,
\begin{eqnarray}
\label{eq:algebraic}
\Sigma_{p.p.}(\cL)& \subseteq & \A.
\end{eqnarray}
\end{corollary}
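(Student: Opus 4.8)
The plan is to derive Corollary \ref{co:algebraic} as a direct consequence of the structural part of Theorem \ref{th:main}, namely the assertion that the connected components induced by $\cI_\lambda$ are finite $\lambda$-prime trees. Suppose $\lambda\in\Sigma_{p.p.}(\cL)$, so that $\mu_\cL(\{\lambda\})>0$. By the definition \eqref{df:uni}, this means $\PP(\mu_{(G,o)}(\{\lambda\})>0)>0$; equivalently, with positive probability the root $o$ lies in $\cI_\lambda$. On this event, Theorem \ref{th:main} tells us that the connected component of $o$ in the subgraph induced by $\cI_\lambda$ is a finite $\lambda$-prime tree $T$. In particular $\lambda$ is an eigenvalue of the adjacency matrix of $T$, which is a finite symmetric matrix with integer (indeed $0/1$) entries.

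The next step is to observe that the eigenvalues of such a matrix are exactly roots of its characteristic polynomial $\chi_T(x)=\det(xI-A_T)$, which is a monic polynomial with integer coefficients. Moreover, since $A_T$ is a real symmetric matrix, all of its eigenvalues are real, so $\chi_T$ is real-rooted. Hence $\lambda$ is a root of a monic, integer-coefficient, real-rooted polynomial, i.e. $\lambda$ is a totally real algebraic integer: $\lambda\in\A$. Since $\lambda\in\Sigma_{p.p.}(\cL)$ was arbitrary, this establishes the inclusion \eqref{eq:algebraic}.

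I expect there to be essentially no obstacle here, since the corollary is a soft consequence of the qualitative description of the blocks already contained in Theorem \ref{th:main}; the only point worth stating carefully is the reduction from $\mu_\cL(\{\lambda\})>0$ to the existence — with positive probability — of a finite $\lambda$-prime component containing the root, which is immediate from \eqref{df:uni} and \eqref{df:cI}. One could optionally remark that $\A$ is indeed a (countable, dense) subring of $\R$, but that is standard algebraic number theory and not needed for the proof itself. Thus the entire argument is: positive spectral mass at $\lambda$ forces a nonempty $\lambda$-support, its components are finite $\lambda$-prime trees by Theorem \ref{th:main}, and finite symmetric integer matrices have totally real algebraic integer eigenvalues.
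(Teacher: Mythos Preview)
Your proof is correct and follows essentially the same approach as the paper: the paper also derives Corollary~\ref{co:algebraic} directly from the structural part of Theorem~\ref{th:main}, noting that a spectral atom at $\lambda$ forces the existence of finite $\lambda$-prime trees, whose adjacency matrices have $\lambda$ as an eigenvalue and hence $\lambda\in\A$. There is nothing to add.
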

We note that Corollary \ref{co:algebraic} also follows by combining Theorem \ref{th:esd} with a result of Elek \cite{sofic} or Benjamini, Lyons and Schramm \cite{unitree} stating that every unimodular random tree is the local weak limit of some sequence of finite graphs. Theorem \ref{th:main} provides an \emph{intrinsic} explanation for algebraicity, without resorting to finite approximation. Whether the conclusion extends to all unimodular networks -- and, in particular, to Caylay graphs of finitely generated groups -- remains an open problem. Despite its crude appearance, Corollary \ref{co:algebraic} is actually tight: the main result in \cite{eigenvalues} implies that equality holds in (\ref{eq:algebraic}) when $\cL=\textsc{ugwt}(\pi)$, 
for any degree distribution $\pi$ with full support. However, a much stronger conclusion than (\ref{eq:algebraic}) can be obtained as soon as one restricts the degrees of our unimodular random tree, as we now explain. If $o\in\cI_\lambda$, we let $\cc(\cI_\lambda,o)$ denote the connected component of $o$ in the graph induced by $\cI_\lambda$.
Using the Mass Transport Principle and the fact that $\cc(\cI_\lambda,o)$ is a finite tree, one may easily rewrite (\ref{eq:main}) into the \emph{component form}
\begin{eqnarray}
\label{eq:component}
\mu_\cL(\{\lambda\}) & = & \EE\left[\frac{{\bf 1}_{(o\in\cI_\lambda)}}{|\cc(\cI_\lambda,o)|}\left(1-\sum_{x\in\partial\cc(\cI_\lambda,o)}\frac{1}{\deg_{\cI_\lambda}(x)}\right)\right],
\end{eqnarray}
see Section \ref{sec:proof} for details. 
Now, assume that all degrees lie in $\{\delta,\ldots,\Delta\}$ for some fixed integers $\Delta\ge\delta\ge 3$. Since $|\partial S|\ge |S|(\delta-2)+2$ for any finite subset $S$ of vertices, we have the $L^\infty$ bound
\begin{eqnarray*}
\label{eq:bound}
\mu_\cL(\{\lambda\}) & \le & \frac{1}{\tau(\lambda)}\left(1-\frac{(\delta-2)\tau(\lambda)+2}{\Delta}\right)_+,
\end{eqnarray*}
where $\tau(\lambda)$ is the \emph{tree-complexity} of the totally real algebraic integer $\lambda$, defined as the minimum possible size of a tree with eigenvalue $\lambda$. This crude inequality  has a surprisingly strong consequence:
\begin{corollary}
\label{co:finite} Fix two integers $\Delta\ge\delta\ge 3$. If $\cL$ is a unimodular network concentrated on trees with degrees in $\{\delta,\ldots,\Delta\}$, then $\mu_{\cL}$ has only finitely many atoms. More precisely, 
\begin{eqnarray*}
\Sigma_{p.p.}(\cL) & \subseteq & \left\{\lambda\in\A\colon \tau(\lambda)<\frac{\Delta-2}{\delta-2}\right\}.
\end{eqnarray*}
\end{corollary}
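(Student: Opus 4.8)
The plan is to read off the corollary from the \emph{component form} (\ref{eq:component}) of the atomic-mass formula, which the excerpt records as a consequence of Theorem \ref{th:main} and the Mass Transport Principle. Fix $\lambda\in\R$ and work on the event $\{o\in\cI_\lambda\}$, abbreviating $T:=\cc(\cI_\lambda,o)$. Three facts turn the integrand of (\ref{eq:component}) into something deterministic. First, by Theorem \ref{th:main}, $T$ is almost surely a finite $\lambda$-prime tree, so $\lambda$ is an eigenvalue of its adjacency matrix, whence $|T|\ge\tau(\lambda)$ by the very definition of tree-complexity. Second, since every degree is at most $\Delta$, each summand $1/\deg_{\cI_\lambda}(x)$ is at least $1/\Delta$, so $\sum_{x\in\partial T}1/\deg_{\cI_\lambda}(x)\ge|\partial T|/\Delta$. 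Third, the isoperimetric inequality $|\partial S|\ge(\delta-2)|S|+2$, applied to $S=T$; in the connected case this is just an edge count (the subtree on $T$ has $|T|-1$ edges, so at least $\delta|T|-2(|T|-1)=(\delta-2)|T|+2$ edges leave $T$, and in a tree they land on distinct boundary vertices). Combining the three, on $\{o\in\cI_\lambda\}$ the integrand of (\ref{eq:component}) is at most $g(|T|)$, where $g(t):=\frac1t\bigl(1-\frac{(\delta-2)t+2}{\Delta}\bigr)$.

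Next I would exploit monotonicity: writing $g(t)=\frac1t\bigl(1-\frac2\Delta\bigr)-\frac{\delta-2}{\Delta}$ shows that $g$ is strictly decreasing on $(0,\infty)$, because $\Delta\ge3$ makes the coefficient $1-2/\Delta$ positive. Hence $g(|T|)\le g(\tau(\lambda))$ on $\{o\in\cI_\lambda\}$, and since the integrand of (\ref{eq:component}) vanishes off that event we obtain the pointwise bound $\le g(\tau(\lambda))_+$, and then, after taking expectations, the $L^\infty$ estimate $\mu_\cL(\{\lambda\})\le g(\tau(\lambda))_+$ already displayed in the excerpt. Now if $\lambda\in\Sigma_{p.p.}(\cL)$ then the left-hand side is strictly positive, forcing $g(\tau(\lambda))>0$, i.e. $(\delta-2)\tau(\lambda)+2<\Delta$, that is $\tau(\lambda)<\frac{\Delta-2}{\delta-2}$; and $\lambda\in\A$ by Corollary \ref{co:algebraic}. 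This is the claimed inclusion, and finiteness is then immediate: only finitely many trees have fewer than $\frac{\Delta-2}{\delta-2}$ vertices, and each contributes at most its size-many eigenvalues, so $\{\lambda\in\A:\tau(\lambda)<\frac{\Delta-2}{\delta-2}\}$ is finite.

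I do not anticipate a real obstacle: essentially all the content sits upstream, in Theorem \ref{th:main} and in the derivation of (\ref{eq:component}), after which the corollary is a one-variable optimization. The only care needed is bookkeeping — verifying that the per-block estimate may be pulled pointwise inside the expectation and that inserting the positive part is harmless — together with, if one insists on the isoperimetric inequality for arbitrary finite $S$ rather than the connected case that is actually used, a slightly more careful count of the edges leaving $S$.
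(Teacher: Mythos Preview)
Your proposal is correct and follows essentially the same argument as the paper: bound the integrand of the component form (\ref{eq:component}) using $|T|\ge\tau(\lambda)$, $\deg_{\cI_\lambda}(x)\le\Delta$, and the tree isoperimetric bound $|\partial T|\ge(\delta-2)|T|+2$, then read off the inclusion. You are in fact slightly more careful than the paper, since you explicitly verify the monotonicity of $g$ needed to pass from $g(|T|)$ to $g(\tau(\lambda))$.
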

An explicit list of the $11$ totally real algebraic integers $\lambda$ with $\tau(\lambda)\le 4$ is given in Table \ref{table:complexity} below. For example, if $\frac{\Delta-2}{\delta-2}\le 2$, then  $\Sigma_{p.p.}(\cL)\subseteq\{0\}$, while if $\frac{\Delta-2}{\delta-2}\le 3$ then $\Sigma_{p.p.}(\cL)\subseteq\{-1,0,+1\}$, and so on. In the special case of unimodular Galton-Watson trees, Corollary \ref{co:finite} answers Question \ref{question} in the affirmative, under the additional restriction that $\pi_2=0$. The constraint $\delta\ge 3$ may actually be relaxed to a control on the \emph{isoperimetric  constant}, defined for any infinite locally finite  graph $G$ by 
\begin{eqnarray}
\ii(G) & := & \inf\left\{\frac{|\partial S|}{|S|}\colon S\subseteq V(G),0<|S|<\infty\right\}.
\end{eqnarray}
Indeed, the same argument as above with  $|\partial S|\ge |S|(\delta-2)+2$ replaced by $|\partial S|\ge \ii(G)|S|$ yields
\begin{corollary} Fix $\Delta<\infty$, $\ii>0$.  If $\cL$ is a unimodular network concentrated on infinite trees with degrees at most $\Delta$ and isoperimetric constant at least $\ii$, then 
\begin{eqnarray*}
\Sigma_{p.p.}(\cL) & \subseteq & \left\{\lambda\in\A\colon \tau(\lambda)<\frac{\Delta}{\ii}\right\}.
\end{eqnarray*}
\end{corollary}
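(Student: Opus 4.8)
The plan is to run exactly the argument outlined above for Corollary \ref{co:finite}, but feeding in the linear isoperimetric bound $|\partial S|\ge\ii\,|S|$ in place of $|\partial S|\ge(\delta-2)|S|+2$. Fix $\lambda\in\Sigma_{p.p.}(\cL)$. By Corollary \ref{co:algebraic} we already know $\lambda\in\A$, so it remains only to show $\tau(\lambda)<\Delta/\ii$. Abbreviate $S:=\cc(\cI_\lambda,o)$, the connected component of the root in the subgraph induced by $\cI_\lambda$. By Theorem \ref{th:main}, on the event $\{o\in\cI_\lambda\}$ the set $S$ is almost surely a finite $\lambda$-prime tree; in particular it is a finite tree having $\lambda$ as an eigenvalue, so $|S|\ge\tau(\lambda)$ by the very definition of the tree-complexity.

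Next I would bound the integrand in the component form (\ref{eq:component}) pointwise on $\{o\in\cI_\lambda\}$. Every vertex $x\in\partial S$ has at least one neighbour in $S\subseteq\cI_\lambda$ and at most $\Delta$ neighbours in total, so $1\le\deg_{\cI_\lambda}(x)\le\Delta$ and hence $\sum_{x\in\partial S}\deg_{\cI_\lambda}(x)^{-1}\ge|\partial S|/\Delta$. Since $S$ is a finite set of vertices of the (almost surely infinite) tree $G$ whose isoperimetric constant satisfies $\ii(G)\ge\ii$, we also have $|\partial S|\ge\ii\,|S|$. Substituting,
\[
\frac{{\bf 1}_{(o\in\cI_\lambda)}}{|S|}\left(1-\sum_{x\in\partial S}\frac{1}{\deg_{\cI_\lambda}(x)}\right)\ \le\ \frac{{\bf 1}_{(o\in\cI_\lambda)}}{|S|}\left(1-\frac{\ii\,|S|}{\Delta}\right)\qquad\text{almost surely.}
\]
Taking expectations and invoking (\ref{eq:component}): if $\mu_\cL(\{\lambda\})>0$, then the integrand in (\ref{eq:component}) must be strictly positive with positive probability, and on that event the right-hand side above is \emph{a fortiori} strictly positive, forcing $1-\ii\,|S|/\Delta>0$, i.e. $|S|<\Delta/\ii$. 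Combined with $|S|\ge\tau(\lambda)$ this gives $\tau(\lambda)<\Delta/\ii$, which is the claim. (Since there are only finitely many trees of size less than $\Delta/\ii$, the set on the right-hand side is finite, so $\mu_\cL$ again has only finitely many atoms, as in Corollary \ref{co:finite}.)

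Frankly, once Theorem \ref{th:main} and its component reformulation (\ref{eq:component}) are available, this corollary is essentially a two-line deduction, and I expect no genuine obstacle here. The only points deserving a word of justification are that the finiteness, the $\lambda$-primality, and hence the size lower bound $|S|\ge\tau(\lambda)$, are all supplied directly by Theorem \ref{th:main}, together with the elementary fact that a probability measure of positive total mass cannot be written as the expectation of an almost-surely non-positive random variable. All the difficulty lies upstream: in establishing the block decomposition of Theorem \ref{th:main} via the Mass Transport Principle, and in passing from the identity (\ref{eq:main}) to the component form (\ref{eq:component}).
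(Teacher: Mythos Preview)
Your proof is correct and follows essentially the same route as the paper: bound the integrand in the component formula (\ref{eq:component}) via $\deg_{\cI_\lambda}(x)\le\Delta$ and $|\partial S|\ge\ii\,|S|$, then invoke $|S|\ge\tau(\lambda)$ from the $\lambda$-primality in Theorem \ref{th:main}. The only cosmetic difference is that the paper packages this as an $L^\infty$ bound $\mu_\cL(\{\lambda\})\le\frac{1}{\tau(\lambda)}\bigl(1-\ii\,\tau(\lambda)/\Delta\bigr)_+$ (using that $t\mapsto t^{-1}(1-\ii t/\Delta)$ is decreasing), whereas you argue by positivity of the integrand on an event of positive probability; the content is identical.
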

However, this is still insufficient to answer Question \ref{question} in full generality, since a unimodular Galton-Watson tree with $\pi_2>0$ contains arbitrarily long paths made of degree-2 vertices. We will overcome this by exploiting a refinement of the isoperimetric constant known as the \emph{anchored isoperimetric constant} $\ii^\star(G,o)$, defined for any (infinite) rooted graph $(G,o)\in\cGs$ by
\begin{eqnarray}
\ii^\star(G,o) & := & \lim_{n\to\infty} \inf\left\{\frac{|\partial S|}{|S|}\colon o\in S\subseteq V(G),\, G_{\restriction S} \textrm{ is connected},\, n\le |S|< \infty\right\}.
\end{eqnarray}
It is immediate to see that the choice of the root $o$ is irrelevant, and that $\ii^\star(G,o)\ge \ii(G)$. A more elaborate argument than the one used above will allow us to prove the following result.  
\begin{theorem}Fix $\Delta<\infty$, $\ii^\star>0$.
\label{th:anchored}
If $\cL$ is a unimodular network concentrated on trees with degrees in $\{2,\ldots,\Delta\}$ and {anchored} isoperimetric constant at least $\ii^\star$, then 
\begin{eqnarray*}
\Sigma_{p.p.}(\cL) & \subseteq & \left\{\lambda\in\A\colon \tau(\lambda)<\frac{3\Delta^2}{\ii^\star}\right\}.
\end{eqnarray*}
\end{theorem}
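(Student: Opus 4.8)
We may assume $\tau(\lambda)\ge 3\Delta^{2}/\ii^{\star}$ and $\mu_{\cL}(\{\lambda\})>0$, and aim for a contradiction; combined with Corollary \ref{co:algebraic} this yields the stated inclusion. Start from the component form $(\ref{eq:component})$ of Theorem \ref{th:main}: since its right-hand side is positive, the event
\[
E\;:=\;\bigl\{\,o\in\cI_{\lambda}\ \text{ and }\ \textstyle\sum_{x\in\partial\cc(\cI_{\lambda},o)}\tfrac{1}{\deg_{\cI_{\lambda}}(x)}<1\,\bigr\}
\]
has positive probability. On $E$ write $C:=\cc(\cI_{\lambda},o)$; Theorem \ref{th:main} says $C$ is a finite $\lambda$-prime tree, hence $|C|\ge\tau(\lambda)$, while the inequality defining $E$ together with $\deg_{\cI_{\lambda}}(x)\le\Delta$ forces $|\partial C|\le\Delta-1$. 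As every vertex of $G$ outside the subtree $C$ has exactly one neighbour in $C$, and each leaf of $C$ (of degree $\ge2$ in $G$) has a neighbour in $\partial C$, the tree $C$ has at most $|\partial C|\le\Delta-1$ leaves. So with positive probability the root lies in a ``thin'' $\lambda$-prime tree on at least $\tau(\lambda)$ vertices.

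Next I extract a long \emph{bare path} of $G$, meaning a path all of whose internal vertices have degree exactly $2$ in $G$. Suppressing the degree-$2$ vertices of $C$ produces a tree with at most $\Delta-1$ leaves and therefore at most $2\Delta-5$ edges, so $C$ is a concatenation of at most $2\Delta-5$ paths through its degree-$2$ vertices, the longest having at least $(|C|-1)/(2\Delta)$ edges. At most $|\partial C|\le\Delta-1$ vertices of $C$ have a neighbour outside $C$; removing them from that long path splits it into at most $\Delta$ sub-paths whose internal vertices have degree $2$ in $G$, one of which is a bare path $Q\subseteq C$ on at least $\tau(\lambda)/(2\Delta^{2})-O(1)$ vertices. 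An application of the Mass Transport Principle — transporting the indicator of $E$ evenly onto the middle third of $Q$ — then yields that with positive probability the maximal run of degree-$2$ vertices of $G$ containing the root has at least $2\theta_{\lambda}$ vertices, with $\theta_{\lambda}:=\tau(\lambda)/(6\Delta^{2})-O(1)$, and contains the root in its middle third; equivalently, there is a connected set $S\ni o$ with $|S|\ge2\theta_{\lambda}$ and $|\partial S|\le2$.

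It remains to contradict the almost-sure bound $\ii^{\star}(G,o)\ge\ii^{\star}$. We have $|\partial S|/|S|\le1/\theta_{\lambda}<\ii^{\star}$ once $\tau(\lambda)\ge3\Delta^{2}/\ii^{\star}$ (modulo the additive errors, which the constant $3$ is there to absorb). The subtlety — and what I expect to be the main obstacle — is that the \emph{anchored} isoperimetric constant only constrains connected sets $S\ni o$ whose size exceeds a finite but a priori $(G,o)$-dependent threshold, so a single set of the fixed size $2\theta_{\lambda}$ does not suffice: one must produce connected subsets through $o$ of \emph{unbounded} size with boundary-to-size ratio kept below $\ii^{\star}$. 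I would do this by concatenating bare paths arising from successive components of $\cI_{\lambda}$ along the tree, using that adjacent components meet at single vertices of $\partial\cI_{\lambda}$, so that the connectors between consecutive bare paths and the junction vertices internal to each component contribute only $O(\Delta)$ boundary vertices per component while each component brings in $\gtrsim\tau(\lambda)/\Delta^{2}$ interior vertices; carrying this book-keeping through is precisely where the factor $3\Delta^{2}$ gets pinned down. Once arbitrarily large connected sets through $o$ of boundary-to-size ratio $<\ii^{\star}$ are in hand, they contradict $\ii^{\star}(G,o)\ge\ii^{\star}$, forcing $\PP(E)=0$ and hence $\mu_{\cL}(\{\lambda\})=0$.
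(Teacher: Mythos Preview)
Your reduction to the event $E$ and the extraction of a long bare path inside the single component $C=\cc(\cI_\lambda,o)$ are correct: the bound $|\partial C|\le\Delta-1$, the leaf count, and the existence of a bare segment of length $\gtrsim\tau(\lambda)/\Delta^{2}$ all go through. You also correctly identify the real difficulty, namely that a single set of fixed size cannot witness a violation of the \emph{anchored} isoperimetric constant.

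The gap is precisely in the concatenation step you sketch. Your claim that ``each component brings in $\gtrsim\tau(\lambda)/\Delta^{2}$ interior vertices'' was proved only for the component satisfying $E$, where $\sum_{x\in\partial C}\deg_{\cI_\lambda}(x)^{-1}<1$ forces $|\partial C|\le\Delta-1$ and hence few leaves. Neighboring components of $\cI_\lambda$ have no reason to enjoy this property: a component $C'$ with $\sum_{x\in\partial C'}\deg_{\cI_\lambda}(x)^{-1}\ge 1$ can have $|\partial C'|$ of order $|C'|$, hence a number of leaves comparable to $|C'|$, and then the bare path between your chosen entry and exit points may be of bounded length. So as you walk through successive components, the interior-to-boundary accounting you propose breaks down, and there is no mechanism in your argument to control how often such ``bad'' components are encountered. (There is also the minor point that you have not checked that the chain of components through $o$ is actually infinite, though this follows from the observation that $\cS=\cI_\lambda\cup\partial\cI_\lambda$ has internal degree $\ge 2$ everywhere.)

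The paper avoids this obstacle by never isolating a single good component. It shows via the Mass Transport Principle that the whole set $\cS=\cI_\lambda\cup\partial\cI_\lambda$ is $\varepsilon$-thin with $\varepsilon=2(\Delta-2)/\tau(\lambda)$, meaning $\EE[\deg(o)-2\mid o\in\cS]\le\varepsilon$ while $\deg_\cS(o)\ge 2$ always. This averaging absorbs the bad components automatically. It then invokes \cite[Proposition 5.4]{BSV} to extract, in a unimodular way, a sub-line-ensemble $\cS'\subseteq\cS$ with $\deg_{\cS'}\equiv 2$; along the resulting bi-infinite path one takes the segments $\cS_n$ of length $2n+1$ centered at $o$ and applies Fatou to pass to $n\to\infty$. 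The factor $3\Delta^{2}$ comes from the density loss $2/(3\Delta)$ in the line-ensemble extraction combined with $\varepsilon\le 2\Delta/\tau(\lambda)$, not from any per-component geometry.
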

A recent result of Chen and Peres \cite[Corollary 1.3]{anchored} implies that the anchored isoperimetric constant of $\textsc{ugwt}(\pi)$ is deterministically bounded away from $0$ when $\pi_0=\pi_1=0$ and $\pi_2<1$. This finally allows us to answer Question \ref{question} in the affirmative, as promised at the beginning. 
\begin{corollary}
\label{co:conj}
If $\pi$ has finite support and $\pi_1=0$, then $\Sigma_{p.p.}({\textsc{ugwt}(\pi)})$ is finite.
\end{corollary}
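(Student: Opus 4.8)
The plan is to derive this from Theorem~\ref{th:anchored} together with the Chen--Peres estimate, once we have stripped away the two features that a general finitely-supported $\pi$ with $\pi_1=0$ may exhibit but which are not permitted by the hypotheses of Theorem~\ref{th:anchored}: the root may be isolated (when $\pi_0>0$), and the tree may degenerate to the bi-infinite line (when $\pi_2=1$). The starting observation is that, because $\pi_1=0$, the size-biased law satisfies $\widehat\pi_0=0$, so that almost surely every non-root vertex of $\textsc{ugwt}(\pi)$ has at least one child. Hence $\textsc{ugwt}(\pi)$ is, almost surely, either a single isolated vertex (an event of probability $\pi_0$) or an infinite tree in which every vertex has degree at least $2$.

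First I would remove the atom at $0$ coming from the isolated-vertex event. Splitting $\mu_{\textsc{ugwt}(\pi)}=\EE[\mu_{(G,o)}]$ along the above dichotomy gives $\mu_{\textsc{ugwt}(\pi)}=\pi_0\,\delta_0+(1-\pi_0)\,\mu_{\textsc{ugwt}(\pi')}$, where $\pi'$ is the law of $\pi$ conditioned to be at least $2$ (well defined since $\pi$ has non-zero mean and $\pi_1=0$, so $1-\pi_0>0$). The key point is that the event ``$G$ is infinite'' depends only on the offspring count of the root, leaving the descendant subtrees i.i.d.\ $\widehat\pi$-Galton--Watson; and one checks directly that $\widehat{\pi'}=\widehat\pi$, so the conditioned law is again a unimodular Galton--Watson tree, namely $\textsc{ugwt}(\pi')$, with $\pi'_0=\pi'_1=0$ and $\pi'$ still of finite support. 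Since $0\in\A$, it now suffices to prove that $\Sigma_{p.p.}(\textsc{ugwt}(\pi'))$ is finite; in other words, we may henceforth assume $\pi_0=\pi_1=0$.

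Under this assumption I would distinguish two cases. If $\pi_2=1$, then $\textsc{ugwt}(\pi)$ is deterministically the bi-infinite line, whose spectral measure at any vertex is the arcsine law obtained by setting $r=2$ in~(\ref{mckay}); it has no atom, so $\Sigma_{p.p.}(\textsc{ugwt}(\pi))=\emptyset$ and we are done. If instead $\pi_2<1$, then \cite[Corollary~1.3]{anchored} provides a deterministic constant $\ii^\star>0$ with $\ii^\star(G,o)\ge\ii^\star$ almost surely, while $\pi_0=\pi_1=0$ forces all degrees to be at least $2$ and the finiteness of $\mathrm{supp}(\pi)$ bounds them by some $\Delta<\infty$. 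Theorem~\ref{th:anchored} then yields $\Sigma_{p.p.}(\textsc{ugwt}(\pi))\subseteq\{\lambda\in\A\colon\tau(\lambda)<3\Delta^2/\ii^\star\}$, a finite set, since there are only finitely many unlabelled trees on fewer than a prescribed number of vertices, hence only finitely many totally real algebraic integers of bounded tree-complexity.

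I do not anticipate a genuine obstacle here: the only ingredient not already established in the present paper is the anchored-isoperimetry lower bound of Chen and Peres, which is precisely what pushes the finiteness conclusion past the degree-two barrier of Corollary~\ref{co:finite}. The one step that warrants a little care is the reduction of the second paragraph, namely verifying that conditioning on non-extinction keeps us within the Galton--Watson class (so that Theorem~\ref{th:anchored} is applicable to the conditioned tree) and that the atom discarded in the process is exactly the harmless one at~$0$.
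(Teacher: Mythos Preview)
Your proposal is correct and follows essentially the same approach as the paper: invoke the Chen--Peres anchored-isoperimetry bound and then apply Theorem~\ref{th:anchored}. In fact you are more careful than the paper, which leaves the reductions for $\pi_0>0$ and $\pi_2=1$ entirely implicit; your verification that $\widehat{\pi'}=\widehat\pi$ (so the conditioned law is again a UGWT) and your separate treatment of the bi-infinite line are exactly the details needed to make the one-line deduction in the paper rigorous.
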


\medskip

\renewcommand{\arraystretch}{2.4}
    \begin{table}[h!]
    \begin{center}
        \begin{tabular}{|>{\centering}p{3.4cm}|>{\centering}p{1.5cm}|>{\centering}p{1.5cm}|>{\centering}p{1.5cm}|>{\centering}p{1.5cm}|>{\centering}p{1.5cm}|>{\centering}p{1.5cm}|c}
        \hline
        Algebraic integer $\lambda$ & $0$ & $\pm 1$ & $\pm \sqrt{2}$ & $\pm \sqrt{3}$ & $\pm \frac{1+\sqrt{5}}{2}$ & $\pm \frac{1-\sqrt{5}}{2}$ & $\cdots$ \\
        \hline
Minimal tree &  
\centering
\begin{tikzpicture}[font=\footnotesize,scale=0.5]
\tikzstyle{solid node}=[circle,draw,inner sep=1.2,fill=black];
\node(0)[solid node]{};
\end{tikzpicture}
&
\centering
\begin{tikzpicture}[font=\footnotesize,scale=0.5]
\tikzstyle{solid node}=[circle,draw,inner sep=1.2,fill=black];
\node(0)[solid node]{}
child[grow=right]{node[solid node]{}
edge from parent node[left]{}}; 
\end{tikzpicture} & 
\begin{tikzpicture}[font=\footnotesize,scale=0.5]
\tikzstyle{solid node}=[circle,draw,inner sep=1.2,fill=black];
\node(0)[solid node]{}
child[grow=left]{node[solid node]{} edge from parent node[left]{}}
child[grow=right]{node[solid node]{} edge from parent node[left]{}}; 
\end{tikzpicture} & 
\begin{tikzpicture}[font=\footnotesize,scale=0.5]
\tikzstyle{solid node}=[circle,draw,inner sep=1.2,fill=black];
\node(0)[solid node]{}
child[grow=left]{node[solid node]{} edge from parent }
child[grow=right]{node[solid node]{} edge from parent }
child[grow=up]{node[solid node]{} edge from parent }; 
\end{tikzpicture} &
 \multicolumn{2}{c|}{
\begin{tikzpicture}[font=\footnotesize,scale=0.5]
\tikzstyle{solid node}=[circle,draw,inner sep=1.2,fill=black];
\node(0)[solid node]{}
child[grow=left]{node[solid node]{} edge from parent}
child[grow=right]{node[solid node]{} edge from parent
child[grow=right]{node[solid node]{} edge from parent }}; 
\end{tikzpicture} } & $\cdots$ \\
                \hline
                       Complexity $\tau(\lambda)$ & $1$ & $2$ & $3$ & \multicolumn{3}{c|}{$4$} & $\cdots$\\
        \hline
        \end{tabular}
        \caption{The totally real algebraic integers $\lambda$ with tree-complexity $\tau(\lambda)\le 4$.}
        \label{table:complexity}
    \end{center}
    \end{table}
We note that there is only one $0-$prime tree, namely the single vertex. Retrospectively, this simplification seems to be the essential reason behind the existence of an explicit formula for $\mu_{\textsc{ugwt}(\pi)}(\{0\})$ in terms of the generating function of $\pi$, as obtained in \cite{rank}. In contrast, there are infinitely many $\lambda-$prime trees for each $\lambda\in\A\setminus\{0\}$, leaving few hope for a general, explicit expression. The remainder of the paper is devoted to the proof of Theorem \ref{th:main} and Theorem \ref{th:anchored}. Our starting point is a well-known local recursion governing the resolvent $(A-z)^{-1}$ of self-adjoint trees. 

\section{Spectral decomposition of self-adjoint trees}
In this section, we investigate the structure of the $\lambda-$support of an arbitrary self-adjoint  tree. 
\subsection{The cavity equations}
As many graph-theoretical quantities, spectral measures 
admit a recursive structure when evaluated on trees. This recursion is better expressed in terms of the \emph{Stieltjes transform} $\s$ of a finite Borel measure $\mu$ on $\R$, defined on the upper half-plane $\HH=\{z\in\C\colon\Im(z)>0\}$ by
\begin{eqnarray}
\s(z) & := & \int_\R\frac{1}{\lambda-z}\,\mu(d\lambda).
\end{eqnarray}
Note that $\s$ is an analytic function from $\HH$ to $\HH$ with the property that
\begin{eqnarray}
\label{eq:unifbound}
\sup\left\{|\s(z)|\times \Im(z)\colon{z\in\HH}\right\} & < & \infty.
\end{eqnarray}
Conversely, it is classical that any analytic function $\s\colon\HH\to\HH$ satisfying (\ref{eq:unifbound}) is the Stieltjes transform of a unique finite Borel measure $\mu$ on $\R$. Its total mass is then given by the supremum in (\ref{eq:unifbound}). Now, if $\mu$ is a finite measure and $\s$ its Stieltjes transform, the equation
\begin{eqnarray}
\label{df:Gamma}
\widehat{\s}(z) & := & \frac{-1}{z+\s(z)}
\end{eqnarray}
defines an analytic function $\widehat{s}\colon\HH\to\HH$ satisfying (\ref{eq:unifbound}) with the supremum being $1$. Therefore, it is the Stieltjes transform of a unique probability measure on $\R$, which we will denote by $\widehat{\mu}$. The transformation $\mu\mapsto\widehat{\mu}$ will play a crucial role, for the following reason. 

Let $T=(V,E)$ be a self-adjoint tree. Deleting a vertex $o$ splits $T$ into $\deg(o)$ disjoint subtrees $(T_{x\to o}\colon x\in\partial o)$. 
 At the operator-theoretic level, this translates into the orthogonal decomposition
\begin{eqnarray}
A_{T\setminus o} & = & \bigoplus_{x\in\partial o}A_{T_{x\to o}}
\end{eqnarray}
From this identity and the fact that $A_T$ is self-adjoint, it easily follows (see, e.g., \cite{PhD}) that the $(A_{T_{x\to o}}\colon x\in\partial o)$ are themselves self-adjoint, and that the spectral measure $\mu_{(T,o)}$ is related to the spectral measures $\left(\mu_{(T_{x\to o},x)}\colon x\in\partial o\right)$ via the formula
\begin{eqnarray}
\label{decide}
\mu_{(T,o)} & = & \widehat{\left\{\sum_{x\in\partial o}\mu_{(T_{x\to o},x)}\right\}}.
\end{eqnarray} 
Similarly, for any $y\in\partial o$, the above formula applied to $T_{o\to y}$ instead of $T$ yields
\begin{eqnarray}
\label{update}
\mu_{{(T_{o\to y},o)}} & = & \widehat{\left\{\sum_{x\in\partial o\setminus \{y\}}\mu_{(T_{x\to o},x)}\right\}}.
\end{eqnarray}  
We will refer to the local identities (\ref{decide}) and (\ref{update}) as the \emph{cavity equations} at $o$ and  $(o,y)$, respectively. This recursive structure is not new (see, e.g., \cite{resolvent,rank}). Our main contribution here is to investigate the way in which  it affects the underlying measures ``locally'', at a given spectral location $\lambda\in\R$.  
\subsection{Impact at a given spectral location $\lambda\in\R$} Let us now fix a location $\lambda\in\R$ and a finite measure $\mu$, and focus on the "local" statistics
\begin{eqnarray}
\label{df:alpha}
\alpha \ := \ \mu(\{\lambda\}) & \textrm{ and } & \beta \ := \ \int_\R\frac{1}{(\xi-\lambda)^2}\, \mu(d\xi).
\end{eqnarray} 
Note that $\alpha=0$ or $\beta=\infty$.  
When $\beta<\infty$, we may safely consider the additional quantity 
\begin{eqnarray}
\label{df:gamma}
\gamma & := & \int_\R\frac{1}{\xi-\lambda}\, \mu(d\xi).
\end{eqnarray}
Perhaps surprisingly, the triple $(\alpha,\beta,\gamma)$ happens to evolve autonomously under the action of $(\ref{df:Gamma})$, in the sense that the triple $(\widehat{\alpha},\widehat{\beta},\widehat{\gamma})$ corresponding to the measure $\widehat{\mu}$ is completely determined by the triple $(\alpha,\beta,\gamma)$ only. The precise  functional relation is summarized in the following Table.
\medskip
\renewcommand{\arraystretch}{2.3}
    \begin{table}[h!]
    \begin{center}
        \begin{tabular}{|>{\centering}p{1.5cm}|>{\centering}p{1.5cm}|>{\centering}p{1.5cm}||c|c|c|}
        \cline{1-6}
         \multicolumn{3}{|c||}{\textbf{Possible cases}} & 
	     \textbf{Value of $\boldsymbol{\widehat{\alpha}}$} & \textbf{ Value of $\boldsymbol{\widehat{\beta}}$ }  & \textbf{Value of $\boldsymbol{\widehat{\gamma}}$}  \\
            \cline{1-6}
             \multicolumn{3}{|c||}{$\alpha>0$} & $0$ & $\displaystyle{\frac 1 \alpha}$ & $0$ \\
            \cline{1-6}
             \multirow{3}{*}{$\alpha=0$} 
                         & \multirow{2}{*}{$\beta<\infty$} & 
                         $\gamma = -\lambda$ & $\displaystyle{\frac 1 {1+\beta}}$  & $\infty$ & undef. \\

                          \cline{3-6}
            & & $\gamma\neq-\lambda$ & $0$ & $\displaystyle{\frac {1+\beta}{(\lambda+\gamma)^2}}$ & $\displaystyle{\frac {-1}{\lambda+\gamma}}$ \\ 
                        \cline{2-6}
& \multicolumn{2}{c||}{$\beta=\infty$}  & $0$ & $\infty$ & undef.  \\
            \hline
        \end{tabular}
        \caption{The values of $\widehat{\alpha},\widehat{\beta},\widehat{\gamma}$  as  functions of $\alpha,\beta,\gamma$. }
        \label{table:Gamma}
    \end{center}
    \end{table}
    
\begin{proof}[Proof of the relations in Table \ref{table:Gamma}]The triple $(\alpha,\beta,\gamma)$ is directly related to the behavior of the Stieltjes transform $\s$ of $\mu$ near the spectral location $\lambda$. Indeed, it follows from the definitions that
\begin{eqnarray}
\label{eq:alpha}
\s(\lambda+i\varepsilon) & = & \frac{i \alpha}{\varepsilon} + o\left(\frac 1\varepsilon\right),
\end{eqnarray}
as $\varepsilon\to 0^+$ (dominated convergence), and also (monotone convergence) that
\begin{eqnarray}
\label{eq:beta}
\beta & = & \displaystyle{\lim_{\varepsilon\to 0^+}\uparrow\frac{\Im\s(\lambda+i\varepsilon)}\varepsilon}.
\end{eqnarray}
Moreover, whenever $\beta<\infty$, we actually have the lower-order expansion
\begin{eqnarray}
\label{eq:gamma}
\s(\lambda+i\varepsilon) & = & \gamma+i\beta\varepsilon+o(\varepsilon). 
\end{eqnarray}
Injecting these into (\ref{df:Gamma}) immediately gives access to the behavior of $\widehat{\s}(\lambda+i\varepsilon)$ as $\varepsilon\to 0^+$. From the latter, we may in turn deduce the values of $\widehat{\alpha},\widehat{\beta},\widehat{\gamma}$, using (\ref{eq:alpha})-(\ref{eq:beta})-(\ref{eq:gamma}) with $\alpha,\beta,\gamma,\s$ replaced by  $\widehat{\alpha},\widehat{\beta},\widehat{\gamma},\widehat{\s}$, respectively. There are four cases, corresponding to the four rows in Table \ref{table:Gamma}:
\begin{itemize} 
\item If $\alpha>0$ then $\widehat{\s}(\lambda+i\varepsilon)\sim\frac{i\varepsilon}{\alpha}$, implying in turn that  $\widehat{\beta}=\frac1\alpha$ and $\widehat{\gamma}=0$. 
\item If $\beta<\infty$, $\gamma=-\lambda_0$ then 
$
\widehat{\s}(\lambda+i\varepsilon) \sim \frac{i}{(1+\beta)\varepsilon}$, implying  $\widehat{\alpha}=\frac{1}{1+\beta}$. 
\item If $\beta<\infty$, $\gamma\neq -\lambda_0$ then $
\widehat{\s}(\lambda+i\varepsilon) = \frac{-1}{\lambda+\gamma}+\frac{i\varepsilon(1+\beta)}{(\lambda+\gamma)^2}+o(\varepsilon)$, implying  $\widehat{\beta}=\frac{1+\beta}{(\lambda+\gamma)^2}$, $\widehat{\gamma}=\frac{-1}{\lambda+\gamma}$. 
\item If $\alpha=0$, $\beta=\infty$ then $|\widehat{\s}(\lambda+i\varepsilon)|\gg \varepsilon$ and $\frac{\Im\widehat{\s}(\lambda+i\varepsilon)}{|\widehat{\s}(\lambda+i\varepsilon)|^2}\gg\varepsilon$. Contradictorily, assuming $\widehat{\beta}<\infty,\widehat{\gamma}= 0$ would yield $|\widehat{\s}(\lambda+i\varepsilon)|\sim \widehat{\beta}\varepsilon$, assuming $\widehat{\beta}<\infty,\widehat{\gamma}\neq 0$ would yield $\frac{\Im\widehat{\s}(\lambda+i\varepsilon)}{|\widehat{\s}(\lambda+i\varepsilon)|^2}\sim \frac{\widehat{\beta}\varepsilon}{\widehat{\gamma}^2}$, and assuming $\widehat{\alpha}>0$ would yield $\frac{\Im\widehat{\s}(\lambda+i\varepsilon)}{|\widehat{\s}(\lambda+i\varepsilon)|^2}\sim\frac{\varepsilon}{\widehat{\alpha}}$. Thus, we must have $\widehat{\alpha}=0$ and $\widehat{\beta}=\infty$.
\end{itemize}This concludes the proof.
\end{proof}
\subsection{Structure of the $\lambda-$support} 
Let us now fix a self-adjoint tree $T=(V,E)$ and apply the general relations given in Table \ref{table:Gamma} to the cavity equations (\ref{decide})-(\ref{update}). For $o\in V$, we let $(\alpha_o,\beta_o,\gamma_o)$ denote the local statistics of the measure $\mu_{(T,o)}$ at  $\lambda$, as defined in (\ref{df:alpha})-(\ref{df:gamma}). We define  $(\alpha_{x\to o},\beta_{x\to o},\gamma_{x\to o})$ similarly, with $\mu_{(T_{x\to o},x)}$ instead.     
\begin{lemma}[Reciprocity relations]
\label{lm:Gamma}
Consider a vertex $o\in\cI_\lambda$. Then  
\begin{eqnarray}
\label{eqz}
\sum_{x\in\partial o}\beta_{x\to o} & < & \infty\\
\label{eqb}
\sum_{x\in\partial o}\gamma_{x\to o} & = & -\lambda\\
\label{deg}
\sum_{x\in\partial o}\alpha_o\beta_{x\to o} & = & 1-\alpha_o. 
\end{eqnarray}
Moreover, for each $y\in\partial o\cap \cI_\lambda$, 
\begin{eqnarray}
\label{prod}
\gamma_{y\to o} & = &  \frac{1}{\gamma_{o\to y}}\\
\label{sum}
\alpha_o\beta_{y\to o} & = & 1-\alpha_y\beta_{o\to y}.
\end{eqnarray}
On the other hand, for each $y\in\partial o \setminus \cI_\lambda$, 
 \begin{eqnarray}
\label{prodz}
\gamma_{y\to o} & = & 0\\
\label{sumz}
\alpha_o\beta_{y\to o} & = & \frac{\alpha_{o\to y}}{\sum_{x\in\partial y\cap\cI_\lambda}\alpha_{x\to y}}.
\end{eqnarray}
\end{lemma}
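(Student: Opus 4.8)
The plan is to derive each identity in Lemma~\ref{lm:Gamma} by feeding the cavity equations into Table~\ref{table:Gamma}. The underlying principle is that the cavity equations express $\mu_{(T,o)}$ and each $\mu_{(T_{o\to y},o)}$ as $\widehat{\{\cdot\}}$ applied to a sum of spectral measures, and the local statistics $(\alpha,\beta,\gamma)$ of a sum of finite measures are simply the sums of the individual statistics (with the convention that $\beta=\infty$ propagates). So the first step is to record: if $\mu=\sum_i\mu_i$ with $\mu_i$ having statistics $(\alpha_i,\beta_i,\gamma_i)$, then $\alpha=\sum_i\alpha_i$, $\beta=\sum_i\beta_i$, and $\gamma=\sum_i\gamma_i$ whenever $\beta<\infty$.

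Next I would fix $o\in\cI_\lambda$, so $\alpha_o>0$, hence in particular $\alpha_o\beta_o$ makes sense only if we know $\beta_o=\infty$ — actually $\alpha_o>0$ forces, in the ``input'' measure $\sum_{x\in\partial o}\mu_{(T_{x\to o},x)}$ feeding~(\ref{decide}), that we are in the second row of Table~\ref{table:Gamma}: $\widehat\alpha>0$ occurs exactly when the input has $\alpha=0$, $\beta<\infty$, and $\gamma=-\lambda$. Reading off the three conditions for that row, applied to the summed statistics, gives immediately $\sum_{x\in\partial o}\beta_{x\to o}<\infty$ (so also each individual $\beta_{x\to o}<\infty$, which legitimizes writing $\gamma_{x\to o}$), then $\sum_{x\in\partial o}\gamma_{x\to o}=-\lambda$, i.e.\ (\ref{eqz}) and (\ref{eqb}); and the formula $\widehat\alpha=\frac{1}{1+\beta}$ with $\beta=\sum_{x\in\partial o}\beta_{x\to o}$ rearranges to $\alpha_o(1+\sum_x\beta_{x\to o})=1$, which is (\ref{deg}).

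For the directed quantities I would apply the same row-reading to the cavity equation~(\ref{update}) for $T_{o\to y}$, whose input is $\sum_{x\in\partial o\setminus\{y\}}\mu_{(T_{x\to o},x)}$. Here the output statistics $(\alpha_{o\to y},\beta_{o\to y},\gamma_{o\to y})$ depend on whether the input $\gamma$, namely $\sum_{x\in\partial o\setminus\{y\}}\gamma_{x\to o}=-\lambda-\gamma_{y\to o}$ (using (\ref{eqb})), equals $-\lambda$ or not — equivalently whether $\gamma_{y\to o}=0$ or not. The case $\gamma_{y\to o}\neq0$ lands in row~three: $\widehat\gamma=\frac{-1}{\lambda+\gamma}=\frac{-1}{\lambda+(-\lambda-\gamma_{y\to o})}=\frac{1}{\gamma_{y\to o}}$, giving $\gamma_{o\to y}\gamma_{y\to o}=1$, i.e.\ (\ref{prod}); and $\widehat\beta=\frac{1+\beta}{(\lambda+\gamma)^2}$ gives $\beta_{o\to y}=\frac{1+\sum_{x\neq y}\beta_{x\to o}}{\gamma_{y\to o}^2}$, which combined with (\ref{deg}) (written as $1+\sum_x\beta_{x\to o}=1/\alpha_o$) and with the relation $\alpha_y=\frac{1}{1+\sum_{x\in\partial y}\beta_{x\to y}}$ (the analogue of (\ref{deg}) at $y$, valid since $y\in\cI_\lambda$) should, after expressing $\beta_{o\to y}$ in terms of $\alpha_o$ and comparing, collapse to (\ref{sum}); here I expect to need to observe that $y\in\cI_\lambda$ forces $\gamma_{y\to o}\neq0$, so this is precisely the $y\in\partial o\cap\cI_\lambda$ case. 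The case $\gamma_{y\to o}=0$ lands in row~two: then $\widehat\alpha=\alpha_{o\to y}=\frac{1}{1+\sum_{x\neq y}\beta_{x\to o}}>0$, so $o\to y$ has a spectral atom; one checks this forces $y\notin\cI_\lambda$ (if $y\in\cI_\lambda$ one would get $\gamma_{y\to o}\neq0$, a contradiction), giving (\ref{prodz}); and then, running the cavity equation~(\ref{decide}) at $y$ and isolating the $o\to y$ contribution — since $\alpha_y=0$, each summand with an atom contributes via its $\alpha$, and $\alpha_o\beta_{y\to o}$ should be read off from row~one ($\widehat\beta=1/\alpha$) applied appropriately — yields (\ref{sumz}).

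The main obstacle I anticipate is purely bookkeeping: keeping straight which measure's statistics are the ``input'' and which the ``output'' of each cavity equation, and correctly matching the sign/normalization in formulas like $\widehat\beta=\frac{1+\beta}{(\lambda+\gamma)^2}$ after substituting $\gamma=-\lambda-\gamma_{y\to o}$. In particular, the equivalences ``$y\in\cI_\lambda\iff\gamma_{y\to o}\neq0$'' (given $o\in\cI_\lambda$) and ``$y\notin\cI_\lambda\iff\gamma_{y\to o}=0$'' need to be established cleanly, since they are what sorts the neighbors into the two cases; these should follow by symmetry from applying the argument with the roles of $o$ and $y$ swapped, noting $\gamma_{o\to y}\gamma_{y\to o}=1$ is incompatible with either factor vanishing. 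Once that dichotomy is nailed down, each of (\ref{eqz})--(\ref{sumz}) is a one- or two-line algebraic consequence of the relevant row of Table~\ref{table:Gamma}.
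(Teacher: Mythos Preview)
Your overall approach is the paper's: read off each relation from the appropriate row of Table~\ref{table:Gamma} applied to the cavity equations~(\ref{decide})--(\ref{update}), splitting the analysis of a neighbor $y\in\partial o$ according to whether $\gamma_{y\to o}$ vanishes. The derivations of (\ref{eqz}), (\ref{eqb}), (\ref{deg}), (\ref{prod}) and (\ref{prodz}) are correct as sketched.

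There are, however, two genuine gaps.

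\emph{The dichotomy.} The direction $\gamma_{y\to o}=0\Rightarrow y\notin\cI_\lambda$ does follow as you indicate (row two at $(o,y)$ gives $\alpha_{o\to y}>0$, which forces row one at $y$). But the reverse direction $\gamma_{y\to o}\neq 0\Rightarrow y\in\cI_\lambda$ cannot be obtained ``by symmetry with the roles of $o$ and $y$ swapped'': that swap presupposes $y\in\cI_\lambda$, which is exactly what you need. The paper instead analyzes the cavity equation at $(y,o)$ directly: since only row three of Table~\ref{table:Gamma} yields $\widehat\gamma\neq 0$, the input $\sum_{x\in\partial y\setminus\{o\}}\mu_{(T_{x\to y},x)}$ must satisfy the row-three hypotheses, and the resulting formula for $\gamma_{y\to o}$, combined with (\ref{prod}), rearranges to $\sum_{x\in\partial y}\gamma_{x\to y}=-\lambda$, whence $y\in\cI_\lambda$. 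This direction is also what legitimizes invoking (\ref{deg}) at $y$ in your derivation of (\ref{sum}).

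\emph{The denominator in (\ref{sumz}).} Your sketch (``row one applied appropriately'') is aiming at the cavity equation at $(y,o)$, not at $y$: since $\gamma_{y\to o}=0$ and $\beta_{y\to o}<\infty$, only row one fits, giving $\beta_{y\to o}=1/\sum_{x\in\partial y\setminus\{o\}}\alpha_{x\to y}$; combined with $1/\alpha_o=\beta_{y\to o}+1/\alpha_{o\to y}$ this yields $\alpha_o\beta_{y\to o}=\alpha_{o\to y}/\sum_{x\in\partial y}\alpha_{x\to y}$. But the lemma asserts the denominator ranges only over $\partial y\cap\cI_\lambda$, and this restriction is not automatic. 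The paper supplies an extra argument: for each $x\in\partial y\setminus\{o\}$, since $\alpha_{o\to y}>0$, row one applies at $(y,x)$, giving $\alpha_{y\to x}=0$, $\gamma_{y\to x}=0$, $\beta_{y\to x}<\infty$; hence the $y$-summand contributes nothing to the $\alpha$- and $\gamma$-parts of the input at $x$, so the row-two criterion for $\alpha_x>0$ coincides with that for $\alpha_{x\to y}>0$. You omit this step entirely.
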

\begin{proof}
Consider the cavity equation at $o$: since only the second row in Table \ref{table:Gamma} yields $\widehat{\alpha}>0$, the assumption $o\in \cI_\lambda$ is actually equivalent to the conditions (\ref{eqz})-(\ref{eqb}), and we then have the formula
\begin{eqnarray}
\label{eqa}\alpha_o & = & \frac{1}{1+\sum_{x\in\partial o}\beta_{x\to o}}.
\end{eqnarray}
The identity (\ref{deg}) follows immediately. Now, let $y\in\partial o$. There are two possible scenarii:

\paragraph{First case: $\gamma_{y\to o}\ne 0$.} Then (\ref{eqb}) fails to hold if $\partial o$ is replaced by $\partial o\setminus\{y\}$. Consequently, the third row in Table \ref{table:Gamma} applies to the cavity equation at $(o,y)$ and yields
\begin{eqnarray}
\label{eqc}
\beta_{o\to y} & = & \frac{1+\sum_{x\in\partial o\setminus\{y\}}\beta_{x\to o}}{\left(\lambda+\sum_{x\in\partial o\setminus\{y\}}\gamma_{x\to o}\right)^2}\\
\label{eqd}
\gamma_{o\to y} & = & \frac{-1}{\lambda+\sum_{x\in\partial o\setminus\{y\}}\gamma_{x\to o}}.
\end{eqnarray}
Note that by (\ref{eqb}), the denominator on the right-hand side of (\ref{eqd}) equals $-\gamma_{y\to o}$, and (\ref{prod}) follows. Now, let us take a look at the cavity equation at $(y,o)$: since only the third row in  Table \ref{table:Gamma} yields $\widehat{\gamma}\neq 0$, the fact that $ \gamma_{y\to o}\neq 0$ ensures that 
\begin{eqnarray}
\label{eqdd}
\gamma_{y\to o} & = & \frac{-1}{\lambda+\sum_{x\in\partial y\setminus\{o\}}\gamma_{x\to y}}.
\end{eqnarray}
Replacing the left-hand side by $1/\gamma_{o\to y}$ and rearranging, we see that 
\begin{eqnarray}
\sum_{x\in\partial y}\gamma_{x\to y} & = & -\lambda.
\end{eqnarray}
Thus, the second row in  Table \ref{table:Gamma} applies to the cavity equation at $y$, and we conclude that $y\in \cI_\lambda$.  Finally, we may use (\ref{eqd}) and  (\ref{prod}) to rewrite (\ref{eqc}) as 
\begin{eqnarray*}
{1+\sum_{x\in\partial o\setminus\{y\}}\beta_{x\to o}} & = & \frac{\beta_{o\to y}}{\left(\gamma_{o\to y}\right)^2}\\ & = & \frac{\gamma_{y\to o}\beta_{o\to y}}{\gamma_{o\to y}},
\end{eqnarray*}
which we may then insert into  (\ref{eqa}) to arrive at
\begin{eqnarray*}
\alpha_o & = & \frac{\gamma_{o\to y}}{\gamma_{o\to y}\beta_{y\to o}+\gamma_{y\to o}\beta_{o\to y}}.
\end{eqnarray*}
By symmetry, the same formula holds with $o$ and $y$ interchanged, and (\ref{sum}) follows. 
\paragraph{Second case: $\gamma_{y\to o}=0$.} Then (\ref{eqz})-(\ref{eqb}) continue to hold with $\partial o$ replaced by $\partial o\setminus\{y\}$. Thus, the second row in Table \ref{table:Gamma} applies to the cavity equation at $(o,y)$ and yields
\begin{eqnarray}
\label{use1}
\alpha_{o\to y} & = & \frac 1{1+\sum_{x\in\partial o\setminus\{y\}}\beta_{x\to o}}.
\end{eqnarray}
In particular, $\alpha_{o\to y}>0$. Thus, the first row in Table \ref{table:Gamma} applies to the cavity equation at $y$, and we conclude that $y\notin\cI_\lambda$. Finally, consider the cavity equation at $(y,o)$:  since only the first row in Table \ref{table:Gamma} yields $\widehat{\gamma}=0$, the assumption $\gamma_{y\to o}=0$ ensures that  
\begin{eqnarray}
%\label{double}
%\sum_{x\in\partial y\setminus \{o\}}\alpha_{x\to y} & > & 0 \\
\label{use2}
\beta_{y\to o} & = & \frac{1}{\sum_{x\in\partial y\setminus \{o\}}\alpha_{x\to y}}.
\end{eqnarray} 
We may now use (\ref{eqa}), (\ref{use1}) and (\ref{use2}) successively to write
\begin{eqnarray*}
\alpha_o\beta_{y\to o} & = & \frac{\beta_{y\to o}}{\beta_{y\to o}+\frac{1}{\alpha_{o\to y}}}\\ & = & \frac{\alpha_{o\to y}}{\alpha_{o\to y}+\frac{1}{\beta_{y\to o}}}\\
& = & \frac{\alpha_{o\to y}}{\sum_{x\in\partial y}\alpha_{x\to y}}.
\end{eqnarray*}
To obtain (\ref{sumz}), it remains to argue that only those $x\in\cI_\lambda$ contribute to the denominator. To see this, consider the cavity equation at $(y,x)$ for an arbitrary $x\in\partial y\setminus \{o\}$: since $\alpha_{o\to y}>0$, the first row in Table \ref{table:Gamma}  guarantees that $\beta_{y\to x}<\infty$ and $\gamma_{y\to x}=0$. In other words, $y$ does not contribute to the criterion for whether $\alpha_x>0$, and the latter becomes equivalent to the condition $\alpha_{x\to y}>0$. 
\end{proof}
\begin{lemma}\label{lm:boundary}
Any vertex in the boundary of $\cI_\lambda$ actually has at least \underline{two} neighbors in $\cI_\lambda$, i.e.
\begin{eqnarray*}
o\in\partial\cI_\lambda & \Longrightarrow & \deg_{\cI_\lambda}(o)\ge 2.
\end{eqnarray*}
\end{lemma}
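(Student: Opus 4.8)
The plan is to argue by contradiction, and the guiding intuition comes from the finite case. On a finite tree, $o\notin\cI_\lambda$ means that \emph{every} $\lambda$-eigenfunction vanishes at $o$; if such an $o$ had a unique neighbour $v$ inside $\cI_\lambda$, then for any $\lambda$-eigenfunction $\phi$ the eigenvalue equation $\sum_{w\sim o}\phi(w)=\lambda\phi(o)=0$ would collapse to $\phi(v)=0$ (all other neighbours of $o$ lying outside $\cI_\lambda$), so $\phi(v)=0$ for every $\lambda$-eigenfunction, contradicting $v\in\cI_\lambda$. The reciprocity relations of Lemma~\ref{lm:Gamma} are exactly the device that lets one run this argument on an arbitrary self-adjoint tree, where genuine eigenfunctions need not exist.

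Concretely, fix $o\in\partial\cI_\lambda$, so $o\notin\cI_\lambda$ and $\deg_{\cI_\lambda}(o)\ge 1$, and suppose towards a contradiction that $\partial o\cap\cI_\lambda=\{v\}$ is a singleton. First I would apply Lemma~\ref{lm:Gamma} at the vertex $v\in\cI_\lambda$, with the neighbour $o\in\partial v\setminus\cI_\lambda$. This places us in the ``second case'' of that lemma: it yields $\gamma_{o\to v}=0$ by~(\ref{prodz}) — which forces the second row of Table~\ref{table:Gamma} at the cavity equation $(v,o)$ and hence $\alpha_{v\to o}>0$ — together with identity~(\ref{sumz}),
\begin{eqnarray*}
\alpha_v\,\beta_{o\to v} & = & \frac{\alpha_{v\to o}}{\sum_{x\in\partial o\cap\cI_\lambda}\alpha_{x\to o}}.
\end{eqnarray*}
Since the only index contributing to the denominator is $x=v$ and $\alpha_{v\to o}>0$, the right-hand side equals exactly $1$, i.e.\ $\alpha_v\beta_{o\to v}=1$.

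Next I would contradict this using identity~(\ref{deg}) at the same vertex $v$, namely $\sum_{x\in\partial v}\alpha_v\beta_{x\to v}=1-\alpha_v$. Every summand is non-negative (each $\beta_{x\to v}\ge 0$, and these are finite by~(\ref{eqz})), so in particular $\alpha_v\beta_{o\to v}\le 1-\alpha_v$; but $v\in\cI_\lambda$ means $\alpha_v>0$, hence $1-\alpha_v<1$, contradicting $\alpha_v\beta_{o\to v}=1$. Therefore $\deg_{\cI_\lambda}(o)\ge 2$, as claimed.

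Because all the analytic content is already packaged into Lemma~\ref{lm:Gamma}, I do not expect a serious obstacle here; the proof is essentially a two-line combination of~(\ref{sumz}) and~(\ref{deg}). The only point needing a little care is that the fraction in~(\ref{sumz}) be non-degenerate when its denominator reduces to the single term $\alpha_{v\to o}$ — i.e.\ that $\alpha_{v\to o}>0$ rather than $0$ — and this is precisely what the ``second case'' analysis ($\gamma_{o\to v}=0\Rightarrow$ second row of Table~\ref{table:Gamma} at $(v,o)$) guarantees.
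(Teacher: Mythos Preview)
Your argument is correct. Both your proof and the paper's start from the same observation: for $v\in\cI_\lambda$ with neighbour $o\notin\cI_\lambda$, identity~(\ref{prodz}) gives $\gamma_{o\to v}=0$, and the ``Second case'' analysis of Lemma~\ref{lm:Gamma} then forces $\alpha_{v\to o}>0$. From there the two proofs diverge. The paper looks directly at the cavity equation at $(o,v)$: since $\gamma_{o\to v}=0$ and $\beta_{o\to v}<\infty$, only the first row of Table~\ref{table:Gamma} can apply, so $\sum_{y\in\partial o\setminus\{v\}}\alpha_{y\to o}>0$; picking any $y$ with $\alpha_{y\to o}>0$ and invoking the equivalence $\alpha_{y\to o}>0\Leftrightarrow y\in\cI_\lambda$ (established at the end of the proof of Lemma~\ref{lm:Gamma}) produces a second neighbour constructively. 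You instead feed the hypothesis $\partial o\cap\cI_\lambda=\{v\}$ into the packaged identity~(\ref{sumz}) to get $\alpha_v\beta_{o\to v}=1$, and close with the inequality $\alpha_v\beta_{o\to v}\le 1-\alpha_v<1$ coming from~(\ref{deg}). Your route is a clean numerical contradiction that stays entirely within the statement of Lemma~\ref{lm:Gamma}; the paper's is slightly more hands-on with Table~\ref{table:Gamma} but has the small advantage of actually exhibiting the second neighbour.
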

\begin{proof}
If $o\in\partial\cI_\lambda$ then by definition, there is some neighbor $x$ of $o$ that lie inside $\cI_\lambda$. By (\ref{prodz}), we know that $\gamma_{o\to x}=0$. Since only the first row in Table \ref{table:Gamma} yields $\widehat{\gamma}=0$, we deduce that 
\begin{eqnarray*}
\sum_{y\in\partial o\setminus x}\alpha_{y\to o} & > & 0. 
\end{eqnarray*}
In other words, $o$ admits another neighbor $y$, distinct from $x$, such that $\alpha_{y\to o}>0$. This actually implies that $y\in\cI_\lambda$, as shown at the end of the above proof. 
\end{proof}
\section{Proofs of the main results}
\label{sec:proof}
We may finally specialize the above deterministic identities to unimodular random trees, and exploit the Mass Transport Principle to establish Theorems \ref{th:main} and \ref{th:anchored}.
\subsection{Proof of Theorem  \ref{th:main}}
\paragraph{The main formula (\ref{eq:main}).}  
For a self-adjoint tree $T=(V,E)$ and a vertex $o\in V$, we have by (\ref{sum})   
\begin{eqnarray*}
\deg_{\cI_\lambda}(o){\bf 1}_{(o\in \cI_\lambda)} & = & 
 \sum_{x\in\partial o} (\alpha_o\beta_{x\to o}+\alpha_x\beta_{o\to x}){\bf 1}_{(x\in \cI_\lambda)}{\bf 1}_{(o\in \cI_\lambda)}.
\end{eqnarray*}
Taking expectation at the root of our unimodular random tree, we obtain
\begin{eqnarray*}
\EE\left[\deg_{\cI_\lambda}(o){\bf 1}_{(o\in \cI_\lambda)}\right] & = & \EE\left[\sum_{x\in\partial o}\alpha_o\beta_{x\to o}{\bf 1}_{(x\in \cI_\lambda)}{\bf 1}_{(o\in \cI_\lambda)}\right]+\EE\left[\sum_{x\in\partial o}\alpha_x\beta_{o\to x}{\bf 1}_{(x\in \cI_\lambda)}{\bf 1}_{(o\in \cI_\lambda)}\right].
\end{eqnarray*}
By the Mass Transport Principle, the two terms on the right-hand side are equal and hence
\begin{eqnarray*}
\EE\left[\sum_{x\in\partial o}\alpha_o\beta_{x\to o}{\bf 1}_{(o\in \cI_\lambda)}{\bf 1}_{(x\in \cI_\lambda)}\right] & = & \frac{1}{2}\EE\left[\deg_{\cI_\lambda}(o){\bf 1}_{(o\in \cI_\lambda)}\right].
\end{eqnarray*}
 On the other hand, using (\ref{sumz}), we have 
\begin{eqnarray*}
\EE\left[\sum_{x\in\partial o}\alpha_o\beta_{x\to o}{\bf 1}_{(o\in\cI_{\lambda})}{\bf 1}_{(x\in\partial\cI_\lambda)}\right] & = & \EE\left[\sum_{x\in\partial o}\frac{\alpha_{o\to x}}{\sum_{y\in \partial x\cap\cI_\lambda}\alpha_{y\to x}}{\bf 1}_{(o\in\cI_{\lambda})}{\bf 1}_{(x\in\partial \cI_\lambda)}\right] \\
& = & \EE\left[\sum_{x\in\partial o}\frac{\alpha_{x\to o}}{\sum_{y\in \partial o\cap\cI_\lambda}\alpha_{y\to o}}{\bf 1}_{(x\in\cI_{\lambda})}{\bf 1}_{(o\in\partial \cI_\lambda)}\right]\\
& = & \PP\left(o\in\partial \cI_\lambda\right),
\end{eqnarray*}
by the Mass Transport Principle again.
Adding-up those two identities, we obtain 
\begin{eqnarray*}
\PP\left(o\in\partial \cI_\lambda\right)+\frac{1}{2}\EE\left[\deg_{\cI_\lambda}(o){\bf 1}_{(o\in\cI_\lambda)}
\right] & = & \EE\left[
\sum_{x\in\partial o}\alpha_o\beta_{x\to o}{\bf 1}_{(o\in\cI_{\lambda})}\right]\\
& = & \EE\left[(1-\alpha_o){\bf 1}_{(o\in\cI_{\lambda})}\right],
\end{eqnarray*}
thanks to (\ref{deg}). Recalling that $\EE[\alpha_o]=\mu_{\cL}(\{\lambda\})$ concludes the proof of the identity (\ref{eq:main}). 
\paragraph{Finiteness of the connected components.}
Let $\cI_\lambda^\star$ denote the restriction of $\cI_\lambda$ to its infinite connected components. In order to show that $\cI_\lambda^\star=\emptyset$ almost-surely, it is enough to prove $\PP\left(o\in \cI_\lambda^\star\right)=0$, since ``everything shows up at the root'' (see \cite{uni}[Lemma 2.3]). Now, the same argument as in the above proof shows that 
\begin{eqnarray*}
\EE\left[\deg_{\cI_\lambda^\star}(o){\bf 1}_{(o\in \cI_\lambda^\star)}\right] & = & 2
\EE\left[\sum_{x\in\partial o}\alpha_o\beta_{x\to o}{\bf 1}_{(o\in \cI_\lambda^\star)}{\bf 1}_{(x\in \cI_\lambda^\star)}\right]\\
 & \le & 2\EE\left[\sum_{x\in\partial o}\alpha_o\beta_{x\to o}{\bf 1}_{(o\in \cI_\lambda^\star)}\right]\\ & \le & 2\EE\left[\left(1-\alpha_o\right){\bf 1}_{(o\in \cI_\lambda^\star)}\right],
\end{eqnarray*}
where in the last line, we have used (\ref{deg}). Consequently, if $\PP(o\in \cI_\lambda^\star)>0$, then
\begin{eqnarray*}
\EE\left[\deg_{\cI_\lambda^\star}(o)\,\big|\, o\in \cI_\lambda^\star\right] & < & 2.
\end{eqnarray*}
By a classical result of Aldous and Lyons \cite{uni}[Theorem 6.1], this strict inequality contradicts the fact that all components of  $\cI_\lambda^\star$ are infinite with probability $1$. Thus, $\PP(o\in \cI_\lambda^\star)=0$, as desired.

\paragraph{The alternative formulation (\ref{eq:component}).} Let us now rewrite the main formula (\ref{eq:main}) into the component form (\ref{eq:component}) using the fact that the components of $\cI_\lambda$ are finite trees. If $x,y$ are neighbors and both lie in $\cI_\lambda$, define the mass sent from $x$ to $y$ to be $|K'|/|K|$, where $K$ denotes the connected component of $\cI_\lambda$ containing $x$ and $y$, and $K'$  the restriction of $K$  to those vertices that are closer to $x$ than to $y$. Otherwise, the mass sent is zero. The Mass Transport Principle then reads
\begin{eqnarray*}
\EE\left[\left(1-\frac{1}{|\cc(\cI_\lambda,o)|}\right){\bf 1}_{(o\in\cI_\lambda)}\right] & = & \EE\left[\left(\deg_{\cI_\lambda}(o)+\frac{1}{|\cc(\cI_\lambda,o)|}\right){\bf 1}_{(o\in\cI_\lambda)}\right].
\end{eqnarray*}
Rearranging, we see that 
\begin{eqnarray}
\label{eq:deg}
\PP\left(o\in\cI_\lambda\right)-\frac 12\EE\left[\deg_{\cI_\lambda}(o){\bf 1}_{(o\in\cI_\lambda)}\right] & = &  
\EE\left[\frac{{\bf 1}_{(o\in\cI_\lambda)}}{|\cc(\cI_\lambda,o)|}\right].
\end{eqnarray}
Now, let us define another mass transport: the mass sent from any $x$ to any $y$ is $1/(|\cc(\cI_\lambda,y)|\deg_{\cI_\lambda}(x))$ if $y\in\cI_\lambda$ and $x\in\partial \cc(\cI_\lambda,y)$, and zero otherwise. Then the Mass Transport Principle reads
\begin{eqnarray*}
\EE\left[\frac{{\bf 1}_{(o\in\cI_\lambda)}}{|\cc(\cI_\lambda,o)|}\sum_{x\in\partial\cc(\cI_\lambda,o)}\frac{1}{\deg_{\cI_\lambda}(x)}\right] & = & \PP\left(o\in\partial\cI_\lambda\right).
\end{eqnarray*}
Combining those two identities shows that the main formula (\ref{eq:main}) is indeed equivalent to (\ref{eq:component}).
\paragraph{$\lambda-$primality of the connected components.}  Take a self-adjoint tree  $T=(V,E)$, and consider any finite connected component $S$ of the subgraph induced by $\cI_\lambda$. By (\ref{prod}), 
\begin{eqnarray*}
\gamma_{y\to x} & = & \frac{1}{\gamma_{x\to y}},
\end{eqnarray*}
for any internal edge $\{x,y\}$ in $S$. This implies that we may write 
\begin{eqnarray*}
\gamma_{y\to x} & = & \frac{-\phi(y)}{\phi(x)},
\end{eqnarray*}
for some function $\phi\colon S\to\R\setminus\{0\}$ (unique, up to proportionality). On the other hand, the identities (\ref{prod}) and (\ref{prodz}) together ensure that for any $x\in S$,
\begin{eqnarray*}
-\sum_{y\in \partial x\cap S} \gamma_{y\to x} & = & \lambda.
\end{eqnarray*}
Those two equations together show that $\phi$ is a (non-vanishing) eigenfunction of $S$ associated with the eigenvalue $\lambda$. This is enough to conclude, thanks to the following characterization. 

\begin{lemma}
Let $\lambda\in\R$ and let $T$ be a finite tree. Then the following are equivalent:
\begin{enumerate}[(i)]
\item $T$ admits a nowhere-vanishing eigenfunction for the eigenvalue $\lambda$. 
\item $T$ is $\lambda-$prime, i.e., $\lambda$ is an eigenvalue of $T$ but not of $T\setminus o$, for any $o\in V$.  
\end{enumerate}
\end{lemma}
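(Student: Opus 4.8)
The plan is to prove the two implications separately, using the structure of trees (in particular, the fact that deleting an edge disconnects the tree) to control how eigenfunctions behave under vertex deletion.

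\textbf{Direction (ii) $\Rightarrow$ (i).} Suppose $T$ is $\lambda$-prime, so $\lambda$ is an eigenvalue of $T$. Pick any eigenfunction $\phi\neq 0$ for $\lambda$ and suppose for contradiction that $\phi(o)=0$ for some vertex $o$. The eigenvalue equation at $o$ reads $\sum_{x\in\partial o}\phi(x)=\lambda\phi(o)=0$. I want to argue that $\phi$, restricted to $T\setminus o$, is again an eigenfunction for $\lambda$ (hence $\lambda$ is an eigenvalue of $T\setminus o$, contradicting primality). Indeed, $T\setminus o$ is the disjoint union of the subtrees $(T_{x\to o}\colon x\in\partial o)$; on the interior of each such subtree the eigenvalue equation for $\phi$ is unchanged, and at a vertex $x\in\partial o$ the equation in $T\setminus o$ loses exactly the term $\phi(o)=0$, so it still holds. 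Thus $\phi|_{T\setminus o}$ is an eigenfunction of $T\setminus o$ unless it is identically zero. So I must rule out $\phi|_{T\setminus o}\equiv 0$, i.e. the case where $\phi$ is supported on $\{o\}$ alone — but then the eigenvalue equation at a neighbour of $o$ forces $\phi(o)=0$, contradicting $\phi\neq 0$. Hence no such $o$ exists and $\phi$ is nowhere vanishing, giving (i). (The simplicity of $\lambda$ and uniqueness of $\phi$ up to scaling, asserted in the remark before the Lemma, can be obtained along the way but is not needed for this implication.)

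\textbf{Direction (i) $\Rightarrow$ (ii).} Suppose $\phi$ is a nowhere-vanishing eigenfunction for $\lambda$; clearly $\lambda$ is an eigenvalue of $T$. Fix a vertex $o$ and suppose for contradiction that $\lambda$ is also an eigenvalue of $T\setminus o$, with eigenfunction $\psi\neq 0$ on $V\setminus\{o\}$. Then $\psi$ is supported on the components $(T_{x\to o})$ and is a genuine eigenfunction on at least one of them, say $T_{x_0\to o}$; restrict attention to that component and relabel $\psi$ so it is supported there and nonzero. Extend $\psi$ to $V$ by setting $\psi(o)=0$. Now compare $\phi$ and $\psi$ along the branch $T_{x_0\to o}$. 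The idea is that a tree subtree rooted at $x_0$ (with $o$ "cut off") has the property that an eigenfunction is determined by its value at $x_0$ together with the recursion propagating away from $o$: given $\psi(x_0)\neq 0$, rescale so that $\psi(x_0)=\phi(x_0)$, and then I claim $\psi=\phi$ throughout $T_{x_0\to o}$. This follows by induction on distance from $x_0$: for a vertex $u$ in the branch with parent $p$ (closer to $x_0$, hence to $o$), the eigenvalue equation for a function on $T_{x_0\to o}$ at $u$ expresses $\sum(\text{children of }u)$ in terms of values at $u$ and $p$ only; so knowing the function on $\{p\}\cup(\text{vertices closer to }x_0)$... more precisely, proceeding down from $x_0$, the values at all vertices of $T_{x_0\to o}$ are determined recursively, so $\psi\equiv\phi$ on $T_{x_0\to o}$. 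But now consider the original eigenvalue equation for $\phi$ at $o$ in the full tree $T$: it involves $\phi(x_0)=\psi(x_0)$, whereas writing the eigenvalue equation for $\psi$ (extended by $0$ at $o$) at the vertex $x_0$ inside $T$ gives $\lambda\psi(x_0)=\sum_{y\in\partial x_0}\psi(y)=\psi(o)+\sum_{y\in\partial x_0\setminus\{o\}}\psi(y)=0+\sum_{y\in\partial x_0\setminus\{o\}}\phi(y)$, while $\lambda\phi(x_0)=\phi(o)+\sum_{y\in\partial x_0\setminus\{o\}}\phi(y)$; subtracting and using $\psi(x_0)=\phi(x_0)$ yields $\phi(o)=0$, contradicting that $\phi$ is nowhere vanishing. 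Hence $\lambda$ is not an eigenvalue of $T\setminus o$, and $T$ is $\lambda$-prime.

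\textbf{Main obstacle.} The delicate point is the rigidity step in (i) $\Rightarrow$ (ii): that an eigenfunction on a rooted branch $T_{x_0\to o}$ is uniquely determined (up to scaling) by its value at the root $x_0$, so that $\psi$ and $\phi$ must coincide on that branch after rescaling. This is exactly the tree recursion for the adjacency eigenvalue equation, run \emph{away} from the cut vertex, and it is what makes $\gamma_{y\to x}=1/\gamma_{x\to y}$ in the earlier discussion work; one has to handle carefully the possibility that the recursion passes through a vertex where the propagated value would have to be $0$, which is where $\phi$ being nowhere vanishing (and hence the uniqueness forcing $\psi$ nowhere vanishing on the branch too) is used. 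Everything else is the routine bookkeeping of the eigenvalue equation under vertex deletion in a tree, exploiting that deletion only removes the single cross term $\phi(o)$ from the equations at neighbours of $o$.
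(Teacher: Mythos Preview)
Your direction (ii)$\Rightarrow$(i) is correct and matches the paper's remark that this implication is easy and holds for arbitrary graphs.

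In (i)$\Rightarrow$(ii), however, the rigidity step contains a genuine gap. You claim that an eigenfunction on the branch $T_{x_0\to o}$ is determined by its value at $x_0$ via a recursion ``propagating away from $o$'', i.e.\ a top-down induction from $x_0$ toward the leaves. This fails at any branching vertex: the eigenvalue equation at $u$ only gives the \emph{sum} $\sum_{\text{children}}\psi(\cdot)=\lambda\psi(u)-\psi(p)$, which does not determine the individual children values when $u$ has more than one child. Indeed, without the nowhere-vanishing hypothesis the claim is simply false (e.g.\ a star centred at $x_0$ with $\lambda=0$ has a two-dimensional eigenspace), and your inductive step never invokes the nowhere-vanishing of $\phi$---it is used only as an initial normalisation. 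A secondary issue is that you assume $\psi(x_0)\neq 0$ without justification; this too would follow from a correct rigidity statement but is not a priori obvious.

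The paper fixes this by running the induction in the opposite direction, from the leaves toward the root. One shows by induction on the height of $T_{o\to o'}$ that any function satisfying the eigenvalue equation on $T_{o\to o'}$ except possibly at the root $o$ is proportional to $\phi$ there: assuming this for each child subtree $T_{x_i\to o}$ yields constants $\kappa_i$ with $\psi=\kappa_i\phi$ on $T_{x_i\to o}$; comparing the eigenvalue equations for $\psi$ and $\kappa_i\phi$ at $x_i$ then gives $\psi(o)=\kappa_i\phi(o)$, and since $\phi(o)\neq 0$ all the $\kappa_i$ coincide. This is exactly where the nowhere-vanishing hypothesis does its work. Once this rigidity is in place, your concluding comparison at $x_0$ (forcing $\phi(o)=0$) is correct.
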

\begin{proof}
The implication $(ii)\Longrightarrow (i)$ is easy and actually holds for arbitrary graphs, as already noted below the definition of  $\lambda-$primality. We focus on the converse. Fix a finite tree $T=(V,E)$ and a function $\phi\colon V\to\R\setminus\{0\}$ satisfying, at every $x\in V$, 
\begin{eqnarray}
\label{eigen}
\sum_{y\in\partial x}\phi(y) & = & \lambda\phi(x)
\end{eqnarray}
Now, consider an oriented edge $(o,o')$ and a function $\varphi\colon V\to\R$ and suppose that $\varphi$ satisfies (\ref{eigen}) at every vertex $x$ of $T_{o\to o'}$, except maybe at $x=o$. We claim that the restriction $\varphi_{\restriction T_{o\to o'}}$ is then proportional to $\phi_{\restriction T_{o\to o'}}$. Let us prove this by induction on the height of the tree $T_{o\to o'}$. If it is reduced to its root $o$, then the claim is trivial. Now, inductively, assume that the claim holds for the trees $T_{x_1\to o},\ldots,T_{x_d\to o}$, where $x_1,\ldots,x_d$ are the neighbors of $o$ other than $o'$. This means that there exists proportionality constants $\kappa_1,\ldots,\kappa_d$ such that $\varphi_{\restriction T_{x_i\to o}} = \kappa_i\phi_{\restriction T_{x_i\to o}}$ for each $1\le i\le d$. 
Comparing the eigenvalue equation for $\varphi$ and for $\kappa_i\phi$ at $x=x_i$, we immediately deduce that
\begin{eqnarray}
\varphi(o) & = & \kappa_i\phi(o).
\end{eqnarray}
Since $\phi(o)\ne 0$, we conclude that $\kappa_1=\cdots=\kappa_d=\frac{\varphi(o)}{\phi(o)}$, and this  precisely means that $\varphi_{\restriction T_{o\to o'}}$ is proportional to $\phi_{\restriction T_{o\to o'}}$, as desired. We now have all we need to prove (ii): fix $o\in V$ and suppose that $T\setminus o$ admits an eigenfunction $\varphi$ associated with the eigenvalue $\lambda$. Extend it to a function on $V$ by setting $\varphi(o)=0$. Then $\varphi$ satisfies (\ref{eigen}) at every $x\in V\setminus o$, and the above argument shows that it must be proportional to $\phi$ everywhere on $V$. Since $\phi(o)\ne 0$, this forces $\varphi\equiv 0$, as desired. 
\end{proof}
\subsection{Proof of Theorem \ref{th:anchored}}
We split the proof into two parts: Lemma \ref{lm:thin} converts the presence of spectral atoms with high tree-complexity into the existence of certain \emph{thin} sets in the tree. In turn, Lemma \ref{lm:anchored} shows that the presence of such thin sets forces  the anchored isoperimetric constant of the tree to be small. 
\begin{lemma}\label{lm:thin}Let $\cL$ be a unimodular network concentrated on trees with all degrees in $\{2,\ldots,\Delta\}$, and let $\lambda\in\Sigma_{p.p.}(\cL)$. Set $\varepsilon=\frac{2(\Delta-2)}{\tau(\lambda)}$. Then $\cS := \cI_\lambda\cup\partial\cI_\lambda$ is \emph{$\varepsilon-$thin} in the following sense:
\begin{enumerate}[(i)]
\item $\cS$ has positive density, i.e. $\PP\left(o\in\cS\right)>0$.
\item Vertices in $\cS$ have internal degree at least two, i.e. $\PP\left(\deg_\cS(o)\ge 2\,|\,o\in\cS\right)=1$.
\item Vertices in $\cS$ have total degree (internal+external) close to two, i.e.  $\EE\left[\deg(o)-2\,|\,o\in\cS\right]\le \varepsilon.$
\end{enumerate}
\end{lemma}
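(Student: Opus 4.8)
The plan is to extract the three properties of $\cS = \cI_\lambda \cup \partial\cI_\lambda$ directly from the structural results already established — Theorem \ref{th:main}, Lemma \ref{lm:boundary}, and the component form (\ref{eq:component}) — together with the tree-complexity bound and a single application of the Mass Transport Principle.

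First, for property (i): since $\lambda \in \Sigma_{p.p.}(\cL)$ we have $\mu_\cL(\{\lambda\}) > 0$, so by (\ref{eq:main}) in particular $\PP(o \in \cI_\lambda) > 0$; as $\cI_\lambda \subseteq \cS$ this gives $\PP(o \in \cS) > 0$ immediately. Property (ii) is where I would separate cases according to whether $o \in \cI_\lambda$ or $o \in \partial\cI_\lambda$. If $o \in \partial\cI_\lambda$, Lemma \ref{lm:boundary} already says $\deg_{\cI_\lambda}(o) \ge 2$, hence $\deg_\cS(o) \ge 2$ since $\cI_\lambda \subseteq \cS$. If $o \in \cI_\lambda$, then $o$ sits in a finite $\lambda$-prime tree $K = \cc(\cI_\lambda, o)$ which has at least two vertices (the only $0$-prime tree is a single vertex, but for $\lambda=0$ one checks $\cS$ directly; for $\lambda \ne 0$ any $\lambda$-prime tree has $\tau(\lambda)\ge 2$ vertices) — wait, more carefully: if $|K| = 1$ then $o$ is an isolated vertex of $\cI_\lambda$, its unique incident eigenfunction value forces $\lambda = 0$; but a degree-$\ge 2$ vertex $o$ has at least two neighbors, each of which must lie in $\cS$ since the eigenfunction constraint at $o$ cannot be satisfied with all neighbors absent — I would argue that each neighbor $x$ of $o$ either lies in $\cI_\lambda$ or, not lying there, still lies in $\partial\cI_\lambda$ by definition, so in fact \emph{every} neighbor of $o \in \cI_\lambda$ lies in $\cS$, giving $\deg_\cS(o) = \deg(o) \ge 2$. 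The genuinely delicate point is the isolated-vertex case in $\cI_\lambda$, which forces $\lambda=0$; here $\cS = \cI_0 \cup \partial\cI_0$ and one uses that $0$-support vertices are non-adjacent in a tree, so an isolated vertex of $\cI_0$ has all its neighbors in $\partial\cI_0 \subseteq \cS$, and $\deg(o) \ge 2$ since the minimum degree is $2$.

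For property (iii), the strategy is to bound $\EE[\deg(o) - 2 \mid o \in \cS]$ using the identity (\ref{eq:component}) and the isoperimetric inequality $|\partial K| \ge |K|(\delta - 2) + 2$ — but here $\delta = 2$ is allowed, so that bound degenerates; instead I would work with $|K'|/|K|$ and the definition of $\tau(\lambda)$. Concretely: by the Mass Transport Principle, $\PP(o \in \cS)\,\EE[\deg(o)-2 \mid o\in\cS] = \EE[(\deg(o)-2)\mathbf 1_{(o\in\cS)}]$, and I would bound the right side by a quantity controlled by $\mu_\cL(\{\lambda\})$. The key combinatorial input is that each finite $\lambda$-prime component $K$ of $\cI_\lambda$ has $|K| \ge \tau(\lambda)$, while the total excess degree summed over $\cS$ restricted to one component $K \cup \partial K$ is $\sum_{v\in K\cup\partial K}(\deg(v)-2) = 2|E(K\cup\partial K)| - 2|K\cup\partial K|$; using that $K$ is a tree and each boundary vertex has $\deg_K \ge 2$, this excess is at most roughly $2(\Delta-2)$ independently of the finer structure, while $|\cS \cap (K\cup\partial K)| \ge |K| \ge \tau(\lambda)$. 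Averaging via MTP over components then gives $\EE[\deg(o)-2 \mid o\in\cS] \le \frac{2(\Delta-2)}{\tau(\lambda)} = \varepsilon$. The main obstacle I anticipate is making this last averaging rigorous: one must set up a mass transport that distributes each component's total excess degree uniformly over the $\ge \tau(\lambda)$ vertices of $\cS$ in that component, and carefully bound the per-component excess $\sum_{v \in \partial K}(\deg_{\text{out}}(v))$ — this requires knowing that boundary vertices of $K$ contribute their \emph{external} edges as excess, and controlling how external edges of distinct components can overlap, which is where the degree bound $\Delta$ and the tree structure (no cycles, so boundaries interact mildly) must be used precisely.
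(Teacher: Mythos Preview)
Your arguments for (i) and (ii) are correct and match the paper's, though you overcomplicate (ii): once you observe that every neighbor of a vertex $o\in\cI_\lambda$ lies in $\cI_\lambda\cup\partial\cI_\lambda=\cS$ by definition, you get $\deg_\cS(o)=\deg(o)\ge 2$ immediately from the minimum-degree hypothesis, and the digression on $\lambda$-primality and the case $\lambda=0$ is unnecessary.

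For (iii), however, there is a genuine gap. Your per-component bound ``the excess $\sum_{v\in K\cup\partial K}(\deg(v)-2)$ is at most roughly $2(\Delta-2)$'' is false. First, $\deg(v)$ is the total degree in the tree, not the degree within $K\cup\partial K$, so the identity $\sum(\deg(v)-2)=2|E(K\cup\partial K)|-2|K\cup\partial K|$ does not hold. Second, Lemma~\ref{lm:boundary} gives $\deg_{\cI_\lambda}(v)\ge 2$ for $v\in\partial\cI_\lambda$, not $\deg_K(v)\ge 2$ for a specific component $K$. Third, and most importantly, a single component $K$ can have up to $(\Delta-2)|K|+2$ boundary vertices, each contributing up to $\Delta-2$ to the excess, so the per-component excess is of order $(\Delta-2)^2|K|$, not $2(\Delta-2)$. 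Averaging this over $|K\cup\partial K|\ge\tau(\lambda)$ gives a bound independent of $\tau(\lambda)$, which is useless.

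The paper's argument is genuinely different and avoids any per-component accounting. It combines three exact identities: the MTP equality $\EE[\deg_{\partial\cI_\lambda}(o)\mathbf 1_{(o\in\cI_\lambda)}]=\EE[\deg_{\cI_\lambda}(o)\mathbf 1_{(o\in\partial\cI_\lambda)}]$, the tree identity (\ref{eq:deg}), and the relation $\PP(o\in\partial\cI_\lambda)=\EE[\mathbf 1_{(o\in\cI_\lambda)}/|\cc(\cI_\lambda,o)|]-\mu_\cL(\{\lambda\})$ obtained from (\ref{eq:main}) and (\ref{eq:deg}). These yield
\[
\EE[(\deg(o)-2)\mathbf 1_{(o\in\cS)}]=\EE[(\deg(o)+\deg_{\cI_\lambda}(o)-2)\mathbf 1_{(o\in\partial\cI_\lambda)}]-2\EE\!\left[\tfrac{\mathbf 1_{(o\in\cI_\lambda)}}{|\cc(\cI_\lambda,o)|}\right].
\]
The crucial step is then the \emph{average} inequality $\PP(o\in\partial\cI_\lambda)\le\EE[\mathbf 1_{(o\in\cI_\lambda)}/|\cc(\cI_\lambda,o)|]$, coming from $\mu_\cL(\{\lambda\})\ge 0$: this says there is at most one boundary vertex per component \emph{on average}, which is false componentwise but true in expectation. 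Bounding $\deg(o)+\deg_{\cI_\lambda}(o)-2\le 2(\Delta-1)$ and using $|\cc(\cI_\lambda,o)|\ge\tau(\lambda)$ then gives the result. The overlap obstacle you anticipate is precisely what this expectation-level argument circumvents.
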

\begin{proof}
The condition (i) is clear, since $\lambda\in\Sigma_{p.p}(\cL)$. The condition (ii) follows from Lemma \ref{lm:boundary} when $o\in\partial\cI_\lambda$, and from the assumption on $\cL$ when $o\in\cI_\lambda$. We now focus on (iii). 
If each $x\in \cI_\lambda$ sends mass $1$ to each $y\in\partial x\setminus\cI_\lambda$, then the Mass Transport Principle reads
\begin{eqnarray*}
\EE\left[\deg_{\partial \cI_\lambda}(o){\bf 1}_{(o\in\cI_\lambda)}\right] 
& = & \EE\left[\deg_{\cI_\lambda}(o){\bf 1}_{(o\in\partial\cI_\lambda)}\right].
\end{eqnarray*}
On the other hand, recall from equation (\ref{eq:deg}) that
\begin{eqnarray*}
\EE\left[\left(\deg_{\cI_\lambda}(o)-2\right){\bf 1}_{(o\in\cI_\lambda)}\right] & = & -2\EE\left[\frac{{\bf 1}_{(o\in\cI_\lambda)}}{|\cc(\cI_\lambda,o)|}\right],
\end{eqnarray*}
and from the identities (\ref{eq:main}) and (\ref{eq:deg}) that
\begin{eqnarray*}
\label{degbis}
\PP\left(o\in\partial\cI_\lambda\right) & = & \EE\left[\frac{{\bf 1}_{(o\in\cI_\lambda)}}{|\cc(\cI_\lambda,o)|}\right]-\mu_\cL(\{\lambda\}).
\end{eqnarray*}
Combining those three identities, we arrive at
\begin{eqnarray*}
\EE\left[\left(\deg(o)-2\right){\bf 1}_{(o\in\cS)}\right] & = & \EE\left[\left(\deg(o)+\deg_{\cI_\lambda}(o)-2\right){\bf 1}_{(o\in\partial\cI_\lambda)}\right]-2\EE\left[\frac{{\bf 1}_{(o\in\cI_\lambda)}}{|\cc(\cI_\lambda,o)|}\right]\\
& \le & 2(\Delta-2)\EE\left[\frac{{\bf 1}_{(o\in\cI_\lambda)}}{|\cc(\cI_\lambda,o)|}\right]\\
& \le & \frac {2(\Delta-2)}{\tau(\lambda)}\PP\left(o\in\cS\right),
\end{eqnarray*}
which is exactly the claim (iii). 
\end{proof}
 \begin{lemma}\label{lm:anchored}Let $\cL$ be a unimodular network concentrated on trees with degrees in $\{2,\ldots,\Delta\}$. Suppose that it admits an $\varepsilon-$thin set in the sense of Lemma \ref{lm:thin} above. Then the event $$\left\{(G,o)\in\cGs\colon \ii^\star(G,o)\le \frac{3\Delta\varepsilon}{2}\right\}$$
has positive probability under $\cL$. 
\end{lemma}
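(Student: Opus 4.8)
The plan is to use the $\varepsilon$-thin set $\cS$ as a reservoir of "almost-1-dimensional" material out of which one can carve a large connected set $S$ containing the root with small external boundary. The intuition is: a vertex of $\cS$ with total degree close to $2$ and internal degree $\ge 2$ behaves essentially like an interior point of a path; stringing together such vertices produces a connected subgraph whose boundary grows very slowly relative to its size, which is exactly a violation of a lower bound on $\ii^\star$. To make "everything show up at the root" work in the anchored setting, I would invoke the standard unimodular rerooting/everything-shows-up-at-the-root principle (\cite{uni}[Lemma 2.3]) to move from a positive-density statement about $\cS$ to a positive-probability statement about the root lying in a suitable such set.

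The key steps, in order, would be: (1) Condition on the positive-probability event $\{o\in\cS\}$ and, using unimodularity, set up an exploration of the connected component $C$ of $o$ inside $\cS$ (recall $\cS$ has internal degree $\ge 2$ at every one of its points, so $C$ has minimum degree $\ge 2$ within $\cS$, hence is infinite or a cycle — but in a tree it must be infinite, so $C$ is an infinite subtree-like set, actually an infinite connected subgraph of a tree). (2) Since $C$ is an infinite connected subset of $\cS$ with $\deg_\cS\ge 2$ everywhere, it contains an infinite ray; more usefully, for each $n$ one can find a finite connected $S\ni o$ with $S\subseteq C$, $|S|\ge n$, and $|\partial S|$ controlled: each vertex of $S$ contributes at most $\deg(o)-2$ "extra" half-edges pointing outside, plus the endpoints of the exploration. (3) Quantify this: $|\partial S| \le \sum_{x\in S}(\deg(x)-2) + (\text{number of leaves of the spanning subtree of } S) \le \sum_{x\in S}(\deg(x)-2) + 2$, and by condition (iii) the average of $\deg(x)-2$ over $\cS$ is at most $\varepsilon$, so by the unimodular ergodic / mass-transport averaging one can select $S$ with $\frac{1}{|S|}\sum_{x\in S}(\deg(x)-2)$ close to its conditional expectation, giving $|\partial S|/|S| \lesssim \varepsilon$ with extra slack absorbed by the degree bound $\Delta$ — the constant $\tfrac{3\Delta\varepsilon}{2}$ is a crude bound accommodating the boundary-counting and the deviation of the empirical degree-excess along $S$ from its mean. (4) Conclude that on this positive-probability event $\ii^\star(G,o) \le \tfrac{3\Delta\varepsilon}{2}$, since $S$ can be taken with $|S|$ arbitrarily large, which is precisely what the limit in the definition of $\ii^\star$ requires.

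The main obstacle I anticipate is Step (3): the passage from the *averaged* estimate $\EE[\deg(o)-2\mid o\in\cS]\le\varepsilon$ to the *existence* of a single connected set $S$, anchored at a typical root and of arbitrarily large size, along which the empirical average of $\deg(x)-2$ is uniformly close to $\varepsilon$. This is not immediate because the component $C$ of the root in $\cS$ could a priori be a region where degrees are atypically large; one needs a second mass-transport argument (or an appeal to the Aldous–Lyons theory of unimodular subgraphs, cf. the use of \cite{uni}[Theorem 6.1] earlier) to argue that conditionally on $o\in\cS$, the component $C$ is itself unimodular-like and hence its own empirical degree statistics match the conditional expectation. The factor $3$ and the $\Delta$ in the final constant are exactly the margin I would spend to convert an asymptotic-average bound into a worst-case-along-$S$ bound via a union bound / pigeonhole over the exploration radius, so I would not try to optimize them.
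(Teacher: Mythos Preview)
Your overall picture is right, and you have correctly isolated the real difficulty in Step~(3). However, the proposal as written does not close that gap, and your guess about where the constant $\tfrac{3\Delta}{2}$ comes from is off.

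The paper resolves Step~(3) not by a pigeonhole or ergodic-averaging argument along an exploration, but by a structural reduction: it invokes \cite[Proposition~5.4]{BSV} to pass from the $\varepsilon$-thin set $\cS$ (internal degree $\ge 2$) to a unimodular subset $\cS'\subseteq\cS$ with internal degree \emph{exactly} $2$ --- a line ensemble --- at the cost of a density loss $\PP(o\in\cS')\ge\tfrac{2}{3\Delta}\PP(o\in\cS)$. This is precisely the origin of the factor $\tfrac{3\Delta}{2}$: the set $\cS'$ is $\varepsilon'$-thin with $\varepsilon'=\tfrac{3\Delta\varepsilon}{2}$. Once $\cS'$ is a line ensemble, the component of $o$ is a bi-infinite path, the sets $\cS'_n$ (the $2n{+}1$ nearest vertices along the path) are canonical, and the Mass Transport Principle yields an \emph{exact} identity
\[
\EE\Bigl[{\bf 1}_{(o\in\cS')}\sum_{x\in\cS'_n}(\deg(x)-2)\Bigr]=(2n{+}1)\,\EE\bigl[{\bf 1}_{(o\in\cS')}(\deg(o)-2)\bigr],
\]
from which $\EE\bigl[{\bf 1}_{(o\in\cS')}\tfrac{|\partial\cS'_n|}{|\cS'_n|}\bigr]\le(\varepsilon'+\tfrac{2}{2n+1})\PP(o\in\cS')$; Fatou then gives $\EE[\ii^\star(G,o)\mid o\in\cS']\le\varepsilon'$, hence the event $\{\ii^\star\le\varepsilon'\}$ has positive probability.

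Your alternative route---exploring the full component $C$ of $o$ in $\cS$ and hoping the empirical degree-excess along a growing connected $S\subseteq C$ tracks its expectation---runs into exactly the problem you flag: when $C$ branches (internal degree $>2$), there is no canonical $S_n$, the MTP identity above fails, and nothing prevents the neighborhood of $o$ in $C$ from being atypically bushy. The vague ``second mass-transport'' or ``pigeonhole over radius'' fixes you mention do not address this, and the constant $\tfrac{3\Delta}{2}$ is not slack left over from such an argument. The missing ingredient is the line-ensemble extraction.
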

\begin{proof}
We first work under the extra assumption that the $\varepsilon-$thin set $\cS$ achieves  equality in condition (ii), i.e., the subgraph induced by $\cS$ is a disjoint union of bi-infinite paths (called a \emph{line ensemble} in \cite{BSV}). On the event $\{o\in\cS\}$, let $\cS_n\subseteq \cS$ denote the subpath consisting of the $2n+1$ vertices of $\cS$ that lie  within distance $n$ from $o$. By the Mass Transport Principle, 
\begin{eqnarray*}
\EE\left[{\bf 1}_{(o\in\cS)}\sum_{x\in\cS_n}(\deg(x)-2)\right] & = & 
(2n+1)\EE\left[{\bf 1}_{(o\in\cS)}(\deg(o)-2)\right].
\end{eqnarray*}
Now, the sum on the left-hand side is exactly $|\partial \cS_n|-2$, while the expectation on the right-hand side is at most $\varepsilon\PP\left(o\in\cS\right)$, by  definition of $\varepsilon-$thinness. Rearranging, we arrive at
\begin{eqnarray*}
\EE\left[{\bf 1}_{(o\in\cS)}\frac{|\partial\cS_n|}{|\cS_n|}\right] & \le & 
\left(\varepsilon+\frac{2}{2n+1}\right)\PP\left(o\in\cS\right).
\end{eqnarray*}
On the other hand, on $\{o\in\cS\}$, we have by definition of the anchored isoperimetric constant,  
\begin{eqnarray*}
\ii^\star(G,o) & \le & \liminf_{n\to\infty}\frac{|\partial\cS_n|}{|\cS_n|}.
\end{eqnarray*}
By Fatou's Lemma, we conclude that
\begin{eqnarray*}
\EE\left[\ii^\star(G,o){\bf 1}_{(o\in\cS)}\right]  \le  \varepsilon\,\PP\left(o\in\cS\right),
\end{eqnarray*}
so that the event $\left\{(G,o)\in\cGs\colon \ii^\star(G,o)\le \varepsilon\right\}$ has positive probability under $\cL$. In the general case, we may always ``extract'' from $\cS$ a random subset $\cS'$ with internal degree exactly $2$ in a unimodular way, thanks to \cite[Proposition 5.4] {BSV} (we actually apply this Proposition to the unimodular law $\cL'$ obtained from $\cL$  by conditioning on $o\in \cS$ and restricting the  graph to $\cc(\cS,o)$, with marks on the vertices to keep track of their degrees in the original graph). The resulting set $\cS'\subseteq \cS$ satisfies
\begin{enumerate}[(i)]
\item $\PP\left(o\in\cS'\right)\ge \frac{2}{3\Delta}\PP\left(o\in\cS\right)$;
\item $\PP\left[\deg_{\cS'}(o)\,|\,o\in\cS'\right)=1$.
\end{enumerate}
We note that the construction of $\cS'$ may use some external randomization; the crucial point is that the resulting network (with $\cS'$ encoded as vertex marks) is unimodular,  so that the above use of the Mass Transport Principe remains valid. Note that $\cS'$ is $\varepsilon'-$thin with $\varepsilon'=\frac{3\Delta\varepsilon}{2}$ since by construction
\begin{eqnarray*}
\EE\left[(\deg(o)-2){\bf 1}_{o\in\cS'}\right] & \le & \EE\left[(\deg(o)-2){\bf 1}_{o\in\cS}\right]\\
& \le & \varepsilon\PP\left(o\in\cS\right)\\
& \le & \frac{3\Delta\varepsilon}{2}\PP\left(o\in\cS'\right).
\end{eqnarray*}
We may thus apply the first part of the proof with $(\cS',\varepsilon')$ instead of $(\cS,\varepsilon)$, and the claim follows. 
\end{proof}

 \bibliographystyle{abbrv}
\bibliography{biblio}

\begin{thebibliography}{10}

\bibitem{pointwise}
M.~Ab{\'e}rt, A.~Thom, and B.~Vir{\'a}g.
\newblock Benjamini-{S}chramm convergence and pointwise convergence of the
  spectral measure.
\newblock {\em In preparation.}, 2016.

\bibitem{uni}
D.~Aldous and R.~Lyons.
\newblock Processes on unimodular random networks.
\newblock {\em Electronic Journal of Probability}, 12:no. 54, 1454--1508, 2007.

\bibitem{obj}
D.~Aldous and J.~M. Steele.
\newblock The objective method: probabilistic combinatorial optimization and
  local weak convergence.
\newblock In {\em Probability on discrete structures}, volume 110 of {\em
  Encyclopaedia Math. Sci.}, pages 1--72. Springer, Berlin, 2004.

\bibitem{2015arXiv150507412B}
{\'A}.~{Backhausz} and B.~{Vir{\'a}g}.
\newblock {Spectral measures of factor of i.i.d. processes on vertex-transitive
  graphs}.
\newblock {\em ArXiv e-prints}, May 2015.

\bibitem{unitree}
I.~Benjamini, R.~Lyons, and O.~Schramm.
\newblock Unimodular random trees.
\newblock {\em Ergodic Theory Dynam. Systems}, 35(2):359--373, 2015.

\bibitem{rec}
I.~Benjamini and O.~Schramm.
\newblock Recurrence of distributional limits of finite planar graphs.
\newblock {\em Electronic Journal of Probability}, 6:no. 23, 13, 2001.

\bibitem{Bordenave2015}
C.~Bordenave.
\newblock On quantum percolation in finite regular graphs.
\newblock {\em Annales Henri Poincar{\'e}}, 16(11):2465--2497, 2015.

\bibitem{BorOptimization}
C.~Bordenave.
\newblock {\em Lecture notes on random graphs and probabilistic combinatorial
  optimization}.
\newblock 2016.
\newblock In preparation.

\bibitem{BorSpectrum}
C.~Bordenave.
\newblock {\em Spectrum of random graphs}.
\newblock 2016.
\newblock In preparation.

\bibitem{resolvent}
C.~Bordenave and M.~Lelarge.
\newblock Resolvent of large random graphs.
\newblock {\em Random Structures Algorithms}, 37(3):332--352, 2010.

\bibitem{rank}
C.~Bordenave, M.~Lelarge, and J.~Salez.
\newblock The rank of diluted random graphs.
\newblock {\em The Annals of Probability}, 39(3):1097--1121, 2011.

\bibitem{BSV}
C.~{Bordenave}, A.~{Sen}, and B.~{Vir{\'a}g}.
\newblock {Mean quantum percolation}.
\newblock {\em ArXiv e-prints}, Aug. 2013.

\bibitem{anchored}
D.~Chen and Y.~Peres.
\newblock Anchored expansion, percolation and speed.
\newblock {\em Ann. Probab.}, 32(4):2978--2995, 2004.
\newblock With an appendix by G{\'a}bor Pete.

\bibitem{cds}
D.~M. Cvetkovi{\'c}, M.~Doob, and H.~Sachs.
\newblock {\em Spectra of graphs}.
\newblock Johann Ambrosius Barth, Heidelberg, third edition, 1995.
\newblock Theory and applications.

\bibitem{sofic}
G.~Elek.
\newblock On the limit of large girth graph sequences.
\newblock {\em Combinatorica}, 30(5):553--563, 2010.

\bibitem{mckay}
B.~McKay.
\newblock The expected eigenvalue distribution of a large regular graph.
\newblock {\em Linear Algebra Appl.}, 40:203--216, 1981.

\bibitem{muller}
V.~M{\"u}ller.
\newblock On the spectrum of an infinite graph.
\newblock {\em Linear Algebra Appl.}, 93:187--189, 1987.

\bibitem{2016arXiv160902209R}
M.~{Rahman}.
\newblock {A lower bound on the spectrum of unimodular networks}.
\newblock {\em ArXiv e-prints}, Sept. 2016.

\bibitem{reedsimon}
M.~Reed and B.~Simon.
\newblock {\em Methods of modern mathematical physics. {I}. {F}unctional
  analysis}.
\newblock Academic Press, New York, 1972.

\bibitem{PhD}
J.~Salez.
\newblock {\em Some implications of local weak convergence for sparse random
  graphs}.
\newblock Theses, {Universit{\'e} Pierre et Marie Curie - Paris VI ; Ecole
  Normale Sup{\'e}rieure de Paris - ENS Paris}, July 2011.

\bibitem{eigenvalues}
J.~Salez.
\newblock Every totally real algebraic integer is a tree eigenvalue.
\newblock {\em Journal of Combinatorial Theory. Series B}, 111:249--256, 2015.

\end{thebibliography}
\end{document}